\newtheorem{theorem}{Theorem}[section]
\newtheorem{thm}[theorem]{Theorem}
\newtheorem{prop}[theorem]{Proposition}
\newtheorem{fact}[theorem]{Fact}
\newtheorem{proposition}[theorem]{Proposition}
\newtheorem{lemma}[theorem]{Lemma}
\newtheorem{corollary}[theorem]{Corollary}
\theoremstyle{definition}
\newtheorem{qn}[theorem]{Question}
\theoremstyle{remark}
\newtheorem{remark}[theorem]{Remark}
\newtheorem{claim}{Claim}
\newcommand{\N}{\mathbb{N}}
\newcommand{\Q}{\mathbb{Q}}
\newcommand{\R}{\mathbb{R}}
\global\long\def\GL{\mathrm{GL}}
\newcommand{\h}{\mathfrak h}
\newcommand{\g}{\mathfrak g}
\newcommand{\oo}{^{\text{o}}}
\newcommand{\gl}{\mathfrak{gl}}
\begin{document}

\title{The derived subgroup of linear and simply-connected o-minimal groups}
\date{\today}

\author{El\'ias Baro}

\address{Departamento de
  \'Algebra; Facultad de Matem\'aticas; Universidad Complutense de
  Madrid; Plaza de Ciencias, 3; Ciudad Universitaria; 28040 Madrid;
  Spain}

\email{eliasbaro@pdi.ucm.es}

\thanks{Research partially supported by the program MTM2017-82105-P}
 \keywords{o-minimal group, linear group, simply-connected, commutator}
\subjclass{03C64}

\begin{abstract}We show that the derived subgroup of a linear definable group in an o-minimal structure is also definable, extending the semialgebraic case proved in \cite{P}. We also show the definability of the derived subgroup in case that the group is simply-connected.
\end{abstract}

\maketitle
\section{Introduction}

Let $\mathcal{R}=\langle R,<, +,\cdot,\cdots\rangle$ be an o-minimal expansion of a real closed field $R$. Algebraic groups over $R$ are clearly definable in $\mathcal{R}$; on the other hand, if $G$ is a group definable in $\mathcal{R}$ and $R=\R$ then $G$ has a Lie group structure (see Preliminaries). In fact, the behaviour of groups definable over $\mathcal{R}$ --hereafter \emph{o-minimal groups}-- rests in between algebraic groups and Lie groups. The definability of the derived subgroup is a good example of this dichotomy. 

As it is well-known, the derived subgroup of an irreducible algebraic group is an irreducible algebraic subgroup. In the context of Lie groups, the derived subgroup is a virtual subgroup (i.e., the image of a Lie homomorphism). However, there are examples of Lie groups --even \emph{solvable}-- whose derived subgroup is not closed  \cite[Ex.1.4.4]{OV}. In two important situations it is closed: either if the Lie group is linear or it is simply-connected. In both cases the proof relies on Lie's third fundamental theorem and therefore it cannot be reproduced in the o-minimal setting.

A. Conversano \cite[\S 1]{CP} showed an example of an o-minimal group $G$ whose derived subgroup $G'$ is not definable (remarkably, the example is semialgebraic over $\R$). Thus, the situation concerning the derived subgroup of o-minimal groups could seem closer to  Lie groups rather than to the algebraic ones. However, Conversano's example is a central extension of a simple group and therefore it is not solvable. Surprisingly, in \cite{BJO} we proved that if $G$ is a solvable  connected  o-minimal group then $G'$ is definable. Moreover, the commutator width of $G$ is bounded by $\dim(G)$. Recall that the derived  (or commutator) subgroup of $G$ is
$$G'=\bigcup_{n\in \N} [G,G]_n$$ where $[G,G]_n$ denotes the definable set of at most $n$ products of commutators. The commutator width is the smallest $n\in \N$ such that $G'=[G,G]_n$ in case it exists.

\medskip
\emph{In this paper we prove that if $G$ is a connected o-minimal group and $G$ is either linear (Theorem \ref{linear}) or simply-connected (Theorem \ref{MainTheoConnected}) then $G'$ is definable}.

\medskip
In Section \ref{s:linear} we address the linear case. A. Pillay already showed in \cite{P} that if $G$ is semialgebraic and linear then $G'$ is semialgebraic. He avoids the use of Lie's third fundamental theorem by considering the Zariski closure.  We will combine the result of Pillay with the definability of the derived subgroup in the solvable case established in \cite{BJO}. Furthermore, we will also provide a bound of the commutator width.

In Section \ref{s:simplyconnected} we make use of the developments of o-minimal homotopy in \cite{BO} to show that a normal connected definable subgroup of a simply-connected definable group is simply-connected (Proposition \ref{propsimplycom}). This allows us to make induction arguments, so we can apply the strategy used in \cite{BJO} to reduce the problem to a minimal configuration: a central extension of a semisimple group (Proposition \ref{miconf}).

Finally, in Section \ref{s:malcev} we apply our results to prove an o-minimal version of a classical result by A. Malcev (Theorem \ref{miconf}) concerning the existence of cross-section of projection maps of quotients of simply-connected Lie groups.

The author wishes to thank the referee for her/his valuable comments and suggestions.

\section{Preliminaries}\label{preliminaries}

We fix an o-minimal expansion $\mathcal{R}$ of a real closed field $R$. Henceforth \emph{definable} means definable in the structure $\mathcal{R}$ possibly with parameters. Let $G$ be a definable group in $\mathcal{R}$, we refer to \cite{O}  for the basics on o-minimal groups. 

For any fixed $p\in \N$, the group $G$ is a topological group with a definable  $\mathcal{C}^p$-manifold structure compatible with the group operation. Any definable subgroup of $G$ is closed and a $\mathcal{C}^p$-submanifold of $G$. Since $\mathcal{R}$ expands a field, we have elimination of imaginaries and therefore the quotients of definable groups by definable subgroups are again definable.  A definable group $G$ is \emph{linear} if $G\leq \GL(n,R)$ for some $n\in \N$.

Any definable subset $X$ of $G$ is the disjoint union of finitely many connected definable components, i.e. definable subsets which cannot be written as the union of two proper open definable subsets. In particular, the connected component $G\oo$ of $G$ which contains the identity is a --normal-- subgroup of finite index (it is the smallest one with that property). We say that $G$ is \emph{connected} if $G\oo=G$. Moreover, the group $G$ has the {\em descending chain condition 
on definable subgroups} ($dcc$ for short): any strictly descending chain of definable subgroups --which must be closed in the topology-- of $G$ is finite. 

In \cite{PPS00} the authors  define the Lie algebra of $G$ similarly as in the classical case. We define the tangent space $\mathcal{T}_e(G)$ as the set of all equivalence classes of definable $C^1$-curves $\sigma:(-1,1)	\rightarrow G$ with $\sigma(0)=e$, where two curves are equivalent if they are tangent
at $0$. We endow $\mathcal{T}_e(G)$ with
the natural vector space structure as in the classical case. Given a local $C^3$-chart $\varphi:U\rightarrow R^n$ around $e\in U\subseteq G$, $\varphi(e)=0$, we can identify $\mathcal{T}_e(G)$ with $R^n$ via the isomorphism sending the equivalence class of a definable $C^1$-curve $\sigma$ to $(\varphi \circ \sigma)'(0)$. Moreover, since $\varphi(ey)=\varphi(y)$ and $\varphi(xe)=\varphi(x)$, using the Taylor expansion we get that
$$\varphi(xy)=\varphi(x)+\varphi(y)+\alpha(\varphi(x),\varphi(y))+\cdots$$
where $\alpha$ is a bilinear vector-valued form and dots stands for elements of order greater than $2$.
The transposition of $x$ and $y$ yields
$$\varphi(yx)=\varphi(y)+\varphi(x)+\alpha(\varphi(y),\varphi(x))+\cdots$$
and therefore we get that
\begin{equation}\label{e:comm}\varphi([x,y])=\varphi(x^{-1}y^{-1}xy)=\gamma(\varphi(x),\varphi(y))+\cdots 
\end{equation}
where $\gamma(\varphi(x),\varphi(y))=\alpha(\varphi(x),\varphi(y))-\alpha(\varphi(y),\varphi(x))$ and dots stand for the terms of order greater then $2$. It turns out that $\mathcal{T}_e(G)$ with the bracket operation $[X,Y]:=\gamma(X,Y)$ is the \emph{Lie algebra} of $G$, denoted by $\text{Lie}(G)$ --and which do not depend on the chart $\varphi$ chosen.

Many basic results from the Lie theory have an o-minimal analogue. For example, if $H$ is a definable subgroup of $G$ then $\text{Lie}(H)$ is a Lie subalgebra of $\text{Lie}(G)$. Furthermore:

\begin{fact}\cite[Claim 2.20]{PPS00}\label{LieAlgChar} If $G_1$ and $G_2$ are two connected definable subgroups of  a definable group $G$ with the same Lie algebra then $G_1=G_2$. 
\end{fact}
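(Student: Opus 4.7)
The plan is to reduce the problem to showing that $G_1$ and $G_2$ coincide in some neighborhood of the identity. This would suffice because any connected definable group is generated, in finitely many steps by the $dcc$, by any definable open neighborhood of $e$; hence if $G_1 \cap G_2$ contains an open neighborhood of $e$ in $G_1$ (equivalently in $G_2$), then $G_1 \subseteq \la G_1 \cap G_2 \ra \subseteq G_2$ and symmetrically.

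First I would carry out a dimension count. The $\mathcal{C}^p$-manifold structure on any connected definable group $H$ gives $\dim H = \dim \text{Lie}(H)$, so the hypothesis yields $\dim G_1 = \dim G_2 =: d$. Next I would consider $K := (G_1 \cap G_2)\oo$, a connected definable subgroup contained in both $G_1$ and $G_2$, and note that $\text{Lie}(K) \subseteq \text{Lie}(G_1) \cap \text{Lie}(G_2) = \text{Lie}(G_1)$. Once the reverse inclusion $\text{Lie}(K) \supseteq \text{Lie}(G_1)$ is in hand, the dimensions match, i.e. $\dim K = d = \dim G_1 = \dim G_2$, and since a connected definable subgroup of a connected definable group of equal dimension must be the whole group, this gives $K = G_1 = G_2$.

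The main obstacle, therefore, is to establish $\text{Lie}(K) \supseteq \text{Lie}(G_1)$: each vector in the common Lie algebra must be realized as the tangent at $0$ of a definable $\mathcal{C}^1$-curve lying inside $G_1 \cap G_2$. The natural tactic is to view both $G_1$ and $G_2$ as connected integral submanifolds through $e$ of the left-invariant distribution on $G$ determined by the Lie subalgebra $\mathfrak{h} := \text{Lie}(G_1) = \text{Lie}(G_2) \leq \text{Lie}(G)$, and to apply a definable Frobenius-type uniqueness result for this involutive distribution to conclude that $G_1$ and $G_2$ share the same germ at $e$. An alternative route, avoiding an explicit Frobenius theorem, is to work in a chart $\varphi$ at $e$ and write $G_i$ locally as a graph over $\mathfrak{h}$ of a definable $\mathcal{C}^p$-function $f_i$ with $f_i(0)=0$ and $df_i(0)=0$; the subgroup-closure condition together with the Taylor expansion $\varphi(xy)=\varphi(x)+\varphi(y)+\alpha(\varphi(x),\varphi(y))+\cdots$ recorded in the preliminaries then pins down the higher-order Taylor coefficients of $f_i$ order-by-order. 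The technical heart in either approach is the same: to upgrade first-order (Lie-algebra) agreement to local coincidence of the two definable submanifolds $G_1$ and $G_2$ near $e$, so that $f_1=f_2$ as germs and the reduction of the first paragraph applies.
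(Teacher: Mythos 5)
The paper states this result as a quoted Fact from \cite[Claim 2.20]{PPS00} and gives no proof of its own, so the comparison is with the argument behind that citation. Your reduction is fine as far as it goes: $\dim G_i=\dim\mathrm{Lie}(G_i)$, and a definable subgroup of full dimension in a definably connected definable group is open, hence open and closed, hence everything; so it suffices to show that $G_1$ and $G_2$ agree near $e$, and you have correctly isolated this as the technical heart. The gap is that neither route you offer for that step survives in the o-minimal setting. A definable Frobenius theorem is not available over an arbitrary real closed field: producing or straightening integral manifolds of the left-invariant distribution amounts to integrating vector fields, which cannot be done definably, so postulating ``a definable Frobenius-type uniqueness result'' begs the question --- the statement to be proved essentially \emph{is} that uniqueness result for this distribution. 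The Taylor-coefficient route fails for a different reason: the group carries only a $\mathcal{C}^p$ structure for finite $p$ (definable $\mathcal{C}^\infty$ or analytic cell decomposition is not available in general o-minimal structures), so only finitely many coefficients can be compared; and even granting agreement of all jets, definable functions need not be analytic --- a definable $\mathcal{C}^\infty$ function flat at $0$, such as $e^{-1/x^2}$ in $\mathbb{R}_{\exp}$, need not vanish near $0$ --- so ``same Taylor coefficients'' does not yield ``$f_1=f_2$ as germs''. This is precisely the trap the paper flags just before Theorem \ref{linear}: infinitesimal data does not pin down locally definable subgroups, as $R$ versus $\mathrm{Fin}(R)$ shows; what rescues the statement is \emph{definability} of $G_1$ and $G_2$, which your argument never actually uses beyond the first paragraph.

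A working argument uses definability globally rather than germ by germ. Consider the definable $\mathcal{C}^1$ map $\pi\colon G_1\to G/G_2$, $x\mapsto xG_2$ (the coset space is a definable $\mathcal{C}^p$ manifold because $G_2$ is closed). Its differential at $e$ vanishes, since $\mathrm{Lie}(G_1)=\mathrm{Lie}(G_2)=\ker(d\pi_e)$, and $\pi$ intertwines left translation by elements of $G_1$ with left translation on $G/G_2$, so $d\pi_x=0$ for every $x\in G_1$. A definable $\mathcal{C}^1$ map with identically vanishing differential on a definably connected manifold is locally constant (mean value theorem in charts), hence constant; therefore $\pi\equiv eG_2$, i.e.\ $G_1\subseteq G_2$, and symmetrically $G_2\subseteq G_1$. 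This is the style of argument behind the cited Claim 2.20; it needs no Frobenius theorem and no control of higher-order jets.
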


In some sense, locally definable groups play the role of virtual Lie groups. A \emph{locally definable group} \cite{E05} is a subset $\mathcal{G}=\bigcup_{n\in \N}X_n$ of $R^\ell$ which is a countable union  of increasing definable subsets $X_n$ and whose group operation restricted to $X_n\times X_n$ is contained in $X_m$ for some $m\in \N$ and it is a definable map. A homomorphism $f:\mathcal{G}\rightarrow \mathcal{H}$ of locally definable groups $\mathcal{G}=\bigcup_{n\in \N}X_n$ and $\mathcal{H}=\bigcup_{m\in \N}Y_m$ is a \emph{locally definable homomorphism} if for each $n$ there is $m$ such that $f(X_n)\subseteq Y_m$ and the restriction $f\upharpoonright X_n$ is definable. For instance, a dense spiral around a real torus is an example of a locally definable subgroup of a definable group (and note that if we move to a saturated model then the spiral becomes closed in the torus, so that we can have locally definable closed subgroups of definable groups).

As before, $\mathcal{G}$ has a locally definable $\mathcal{C}^p$-manifold structure, a submanifold of $\GL(n,R)$ in case that the group $\mathcal{G}$ is linear. We say that a subset $Y\subseteq \mathcal{G}$ is \emph{compatible} if $Y\cap X_n$ is definable for each $n\in \N$. A compatible subset $Y\subseteq \mathcal{G}$ is \emph{connected} if it cannot be 
written as a union of two proper open compatible subsets; any compatible subset is a disjoint union of its countable many (clopen) compatible connected components (see \cite[\S 4]{BOldh}).

\begin{fact}\label{Derlocally}Let $G$ be a connected definable group and let $\g$ be its Lie  algebra. Then $G'$ is a connected locally definable subgroup such that $[\g,\g]\subseteq \text{Lie}(G')$.
\end{fact}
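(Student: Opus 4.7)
The plan is to check three things separately: that $G' = \bigcup_n [G,G]_n$ carries a locally definable group structure, that it is connected, and that $[\g, \g] \subseteq \text{Lie}(G')$. The fact that $G'$ is a subgroup is standard group theory.

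For the first step I would write $X_n := [G,G]_n$ and observe that this is the image under the definable map $G^{2n} \to G$, $(g_1,h_1,\dots,g_n,h_n) \mapsto [g_1,h_1]\cdots[g_n,h_n]$; hence each $X_n$ is definable and connected (as a continuous image of a connected set). Elementary identities --- $[e,e] = e$ for the increasing chain, concatenation for products, and $[a,b]^{-1} = [b,a]$ for inverses --- yield $X_n \subseteq X_{n+1}$, $X_n \cdot X_m \subseteq X_{n+m}$, and $X_n^{-1} \subseteq X_n$. Thus $G' = \bigcup_n X_n$ fits the definition of a locally definable group recalled in the Preliminaries. For connectedness I would verify that no non-trivial decomposition $G' = A \sqcup B$ into disjoint compatible open pieces exists: assuming $e \in A$, the disjoint definable open decomposition $X_n = (X_n \cap A) \sqcup (X_n \cap B)$ of the connected set $X_n$ forces $X_n \subseteq A$, so $B = \emptyset$ and $G' = (G')\oo$.

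The main work is the Lie algebra inclusion. By bilinearity of the bracket it suffices to fix $X, Y \in \g$ and produce a definable $C^1$-curve into $X_1 \subseteq G'$ whose tangent at $0$ is $[X,Y]$. I would take representatives $\sigma, \tau: (-1,1) \to G$, write $v_X := (\varphi \circ \sigma)'(0)$, $v_Y := (\varphi \circ \tau)'(0)$, and define
$$\alpha(u) := \begin{cases} [\sigma(\sqrt{u}), \tau(\sqrt{u})] & \text{if } u \geq 0, \\ [\tau(\sqrt{-u}), \sigma(\sqrt{-u})] & \text{if } u < 0. \end{cases}$$
The function $\sqrt{\cdot}$ is definable over $R$, so $\alpha$ is definable, and the identity $[a,b]^{-1}=[b,a]$ keeps $\alpha(u) \in X_1$. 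The Taylor expansion (\ref{e:comm}) applied with $x = \sigma(s)$, $y = \tau(s)$, together with the bilinearity of $\gamma$, gives
$$\varphi([\sigma(s),\tau(s)]) = s^2\,\gamma(v_X, v_Y) + O(s^3),$$
while the swapped version contributes $-s^2\,\gamma(v_X, v_Y) + O(s^3)$ by antisymmetry of $\gamma$. Substituting $s = \sqrt{|u|}$ and tracking the case split yields
$$\varphi(\alpha(u)) = u\,\gamma(v_X, v_Y) + O(|u|^{3/2})$$
on both sides of $0$. Since $O(|u|^{3/2}) = o(u)$ and its formal derivative is $O(|u|^{1/2})$, $\alpha$ is $C^1$ in a neighbourhood of $0$ with $(\varphi \circ \alpha)'(0) = \gamma(v_X, v_Y)$. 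Under the identification of $\g$ with $R^n$ via $\varphi$ this is $\bar\alpha = [X,Y]$, so $[X,Y] \in \text{Lie}(G')$.

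The hard step is exactly this last construction. The set $X_1$ is typically not a submanifold at $e$, and a plain commutator curve $t \mapsto [\sigma(t),\tau(t)]$ has vanishing derivative at $0$, so no such curve can by itself realise a non-zero element of $\text{Lie}(G')$. The square-root reparametrisation is what converts the quadratic term of the commutator expansion into a genuine first-order tangent, and the antisymmetry $\gamma(v_Y, v_X) = -\gamma(v_X, v_Y)$ is precisely what allows the odd extension across $0$ to remain both definable and $C^1$. All other verifications reduce to routine manipulations in the o-minimal setting.
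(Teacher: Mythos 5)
Your proposal is correct and follows essentially the same route as the paper: the same verification that each $[G,G]_n$ is connected and definable as a continuous image (giving local definability and connectedness of $G'$), and the same square-root reparametrisation of the commutator curve, extended oddly across $0$ via $[a,b]^{-1}=[b,a]$ and the antisymmetry of $\gamma$, to realise $[X,Y]$ as a tangent vector of $G'$. Your curve $u\mapsto[\tau(\sqrt{-u}),\sigma(\sqrt{-u})]$ for $u<0$ is literally the paper's $[x(\sqrt{|t|}),y(\sqrt{|t|})]^{-1}$, so the two arguments coincide.
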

\begin{proof}Since $G\times G\rightarrow G:(x,y)\mapsto [x,y]=x^{-1}y^{-1}xy$ is continuous we get that $[G,G]_1$ is a connected definable subset of $G$. Therefore $[G,G]_n=[G,G]_1\overset{n}{\cdots} [G,G]_1$ is connected and definable for each $n\in \N$. In particular, $G'=\bigcup_{n\in \N}[G,G]_n$ is locally definable because $[G,G]_n[G,G]_n=[G,G]_{2n}$ for each $n\in \N$. On the other hand, $[G']\oo\cap [G,G]_n$ is an open and closed definable subset of $[G,G]_n$ and therefore $[G,G]_n\subseteq [G']\oo$, so that $[G']\oo=G'$.

Next, let us see that $[\g,\g]\subseteq \text{Lie}(G')$, we follow \cite[\S 4]{OV}. It suffices to show that given $X,Y\in \g$ then $[X,Y]\in \text{Lie}(G')$. Let $x,y:(-1,1)\rightarrow G$ be definable $C^1$-curves such that $x'(0)=X$ and $y'(0)=Y$. Then 
$$c(t):=\left\{\begin{array}{l}
[x(\sqrt{t}),y(\sqrt{t})], \text{ if } t\in [0,1),\\
{[x(\sqrt{|t|}),y(\sqrt{|t|})]}^{-1}, \text{ if } t\in (-1,0],
\end{array}\right.$$
is a definable $C^1$-curve such that $c'(0)=[X,Y]$. Indeed, by equation (\ref{e:comm}) above,
$$\overline{[x(\sqrt{t}),y(\sqrt{t})]}=\gamma(\overline{x(\sqrt{t})},\overline{y(\sqrt{t})})+\cdots$$
where ``$\overline{\phantom{H}}$" denotes the image by the chart $\varphi$. Thus, 
$$\lim_{t\rightarrow 0^{+}} \tfrac{1}{t} \overline{[x(\sqrt{t}),y(\sqrt{t})]}=\lim_{t\rightarrow 0^{+}} \tfrac{1}
{t}\gamma(\overline{x(\sqrt{t})},\overline{y(\sqrt{t})})$$
$$=\lim_{t\rightarrow 0^{+}}\gamma(\overline{\tfrac{x(\sqrt{t})}{\sqrt{t}}},	\overline{\tfrac{y(\sqrt{t})}{\sqrt{t}}})=\gamma(X,Y)=[X,Y].$$
On the other hand,
$$\lim_{t\rightarrow 0^{-}} \tfrac{1}{t} \overline{[x(\sqrt{|t|}),y(\sqrt{|t|})]}=\lim_{t\rightarrow 0^{-}} \tfrac{1}
{t}\gamma(\overline{x(\sqrt{-t})},\overline{y(\sqrt{-t})})$$
$$=\lim_{t\rightarrow 0^{-}}-\gamma(\overline{\tfrac{x(\sqrt{-t})}{\sqrt{-t}}},	\overline{\tfrac{y(\sqrt{-t})}{\sqrt{-t}}})=-\gamma(X,Y)=-[X,Y]$$
and therefore $\lim_{t\rightarrow 0^{-}} \tfrac{1}{t} \overline{[x(\sqrt{|t|}),y(\sqrt{|t|})]^{-1}}=[X,Y]$, as required.\end{proof}

\begin{remark}Back to the analytic setting, if $G$ is a connected Lie group then $G'$ is a virtual Lie subgroup whose Lie algebra is $[\g,\g]$. The proof uses again in a crucial way Lie's third fundamental theorem, see \cite[Thm.\,1]{OV}. For our purpose concerning the definability of the derived subgroup of certain o-minimal groups, the inclusion provided by Fact \ref{Derlocally} above is enough. However, it is natural to ask if the Lie algebra of the locally definable subgroup $G'$ of a connected definable group $G$ is $[\g,\g]$. We would like to point out that the answer would be positive in case that  the answer of Question \ref{conjecture} is affirmative. For, if $p:\widetilde{G}\rightarrow G$ denotes the (locally definable) universal covering of the connected definable group $G$, then the Lie algebra of $\widetilde{G}$ is again $\g$. By the conjecture we would have that $\widetilde{G}'$ is a compatible subgroup of $\widetilde{G}$ whose Lie algebra is $[\g,\g]$. In particular,  $p\upharpoonright_{\widetilde{G}'}:\widetilde{G}'\rightarrow G'$ is the universal covering of the locally definable group $G'$ and therefore $\text{Lie}(G')=\text{Lie}(\widetilde{G}')=[\g,\g]$, as required.
\end{remark}

A Lie subalgebra  of $\gl(n,R)$  is said to be \emph{algebraic} if it is the Lie algebra of an algebraic subgroup of $\GL(n,R)$. Given  a Lie subalgebra $\g$ of $\gl(n,R)$, $a(\g)$ denotes the  minimal algebraic Lie subalgebra of $\gl(n,R)$ containing $\g$. We recall that if $\mathfrak{g}$ is a subalgebra of $\gl(n,R)$ then $[\mathfrak{g},\mathfrak{g}]=[a(\g),a(\g)]$ is algebraic (see \cite[Ch.3, \S 3]{OV}). 

If $G$ is a semialgebraic subgroup of $\GL(n,R)$ then $G$ and its Zariski closure $\overline{G}$ in $\GL(n,R)$ have the same dimension. This is a crucial aspect in the proof of the following result (the bound in the commutator width can be deduced from the proof there noting that $[G,G]_n^{-1}=[G,G]_n$ for each $n\in \N$):

\begin{fact}\cite[Cor.3.3]{P}\label{Pillay} Let $G$ be a semialgebraic subgroup of $\GL(n,R)$. Then $G'$ is semialgebraic and its commutator width is bounded by $\dim(G)$. 
\end{fact}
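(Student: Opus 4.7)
The strategy is to compare $G$ with its Zariski closure $\overline{G}$ in $\GL(n,R)$, using the key dimension equality $\dim G=\dim \overline{G}$ recalled above. First I would reduce to the case where $G$ is definably connected, since $G\oo$ has finite index in $G$ and commutators across finitely many cosets can be absorbed with only a bounded cost to the width. Reducing on the algebraic side to the identity component of $\overline{G}$, the dimension equality forces $\text{Lie}(G) = \text{Lie}(\overline{G}) =: \g$.

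On the algebraic side, the derived subgroup $(\overline{G})'$ is a closed connected algebraic subgroup of $\overline{G}$, and its Lie algebra is $[\g,\g]$, which is algebraic by the Chevalley identity $[\g,\g] = [a(\g),a(\g)]$ recalled above. A standard dimension argument for algebraic groups --- the increasing chain of constructible sets $[\overline{G},\overline{G}]_n$ has strictly increasing Zariski dimension until it stabilizes, at which point it is a subgroup equal to $(\overline{G})'$ --- gives $(\overline{G})' = [\overline{G},\overline{G}]_d$ for some $d \leq \dim \overline{G} = \dim G$. My plan is to mirror this stabilization on the semialgebraic side.

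The key transfer observation is that the o-minimal dimension of a semialgebraic set equals the Zariski dimension of its Zariski closure. So once the Zariski closures of $[G,G]_n$ stabilize to $(\overline{G})'$, the semialgebraic set $[G,G]_n$ has full dimension inside $(\overline{G})'$ and therefore contains a manifold-open subset of it. Since $[G,G]_n$ is symmetric, the standard identity $UU^{-1} = H$ for an open identity neighborhood $U$ in a connected group $H$ then forces $[G,G]_{2n} = (\overline{G})'$. In particular $G' = [G,G]_{2n}$ is semialgebraic and coincides with $(\overline{G})'$.

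The main obstacle I foresee is extracting the sharp width bound $\dim G$ rather than a doubled or inflated constant. The naive argument above loses a factor of two from $UU^{-1}$; to recover the sharp bound, one must track the strictly increasing chain $\dim [G,G]_1 < \dim [G,G]_2 < \cdots$ and use the identity $[G,G]_m\cdot [G,G]_m = [G,G]_{2m}$ together with the openness step in a balanced way so that the total number of commutator factors stays within $\dim G$. A secondary care point is the initial reduction from $G$ to $G\oo$: coset representatives contribute extra commutators, and one has to argue that this contribution is either already present in $(G\oo)'$ or fits within the same dimension-based bound.
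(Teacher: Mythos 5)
You should first be aware that the paper does not prove this statement: it is imported verbatim from Pillay \cite[Cor.3.3]{P}, and the only proof-related content in the paper is the remark preceding it, namely that the crucial ingredient is the equality $\dim(G)=\dim(\overline{G})$ for semialgebraic subgroups of $\GL(n,R)$, and that the width bound is extracted from Pillay's proof together with the observation $[G,G]_n^{-1}=[G,G]_n$. Your opening moves (passing to the Zariski closure, $\text{Lie}(G)=\text{Lie}(\overline{G})$, algebraicity of $[\g,\g]$, stabilization of the chain of Zariski closures of $[\overline{G},\overline{G}]_n$) are consistent with that strategy.

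The genuine gap is in the step that returns to the semialgebraic side. The identity ``$UU^{-1}=H$ for an open identity neighborhood $U$ in a connected group $H$'' is false: an open neighborhood of the identity \emph{generates} $H$, but need not give all of $H$ in two --- or any bounded number of --- steps; take $H=(R,+)$ and $U=(-1,1)$, where every finite sumset is a bounded interval. What the algebraic-side argument actually uses is that, once the Zariski closures stabilize, $[\overline{G},\overline{G}]_n$ is a constructible \emph{Zariski-dense} subset of the irreducible group $(\overline{G})'$, hence contains a dense open set $U$, so that $hU^{-1}\cap U\neq\emptyset$ for every $h$. Over a real closed field, knowing that the semialgebraic set $[G,G]_n$ has full o-minimal dimension in $(\overline{G})'(R)$ only yields a nonempty open subset, not a dense or \emph{large} one (complement of smaller dimension); and largeness does not transfer from the algebraically closed field to the $R$-points of word-map images (the squaring map on $\mathbb{G}_a(R)$ is dominant with image a half-line whose complement has full dimension). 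This is precisely the point where the semialgebraic case diverges from the algebraic one --- and where the general o-minimal statement actually fails, by Conversano's example --- so it cannot be waved through: without it you do not get termination of the chain $[G,G]_n$ at any finite stage, let alone the sharp width. Your closing paragraph treats the remaining difficulty as a factor of $2$ in the width bound, but the real missing ingredient is the stabilization itself; the bound $\dim(G)$ is then exactly what the paper says must be read off from the internals of Pillay's argument using the symmetry of the sets $[G,G]_n$.
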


It is no longer true in general that if $G$ is a linear o-minimal group then $\dim(G)=\dim(\overline{G})$. Nevertheless, in the proof of Theorem \ref{linear} we will make use of the fact that $G$ and $\overline{G}$ still have a strong relation precisely because the Lie algebra of $G'$ and $\overline{G}'$ coincide (as essentially pointed out in the proof of \cite[Thm.\,4.1]{PS}). We recall that  by \cite[Lem.2.4]{PS} the Zariski Lie algebra of an algebraic subgroup of $\GL(n,R)$ and its o-minimal Lie algebra canonically coincides. Moreover:

\begin{fact} \cite[Prop.3.9]{BBO}\label{zariskiclo} Let $G$ be a definable subgroup of $\GL(n,R)$ and $\overline{G}$ its
Zariski closure. Then, $\text{\emph{Lie}}(\overline{G}) = a(\text{\emph{Lie}}(G))$. Furthermore, if $G$ is connected then $\overline{G}$ is irreducible and $G$ is normal in $\overline{G}$.
\end{fact}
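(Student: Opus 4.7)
The plan is to split the statement into two inclusions for the Lie algebra identity, and then to extract both assertions of the ``furthermore'' part by the same tool, namely Fact \ref{LieAlgChar}. The easy inclusion $a(\text{Lie}(G)) \subseteq \text{Lie}(\overline{G})$ is immediate: any definable $C^1$-curve in $G$ based at the identity is also such a curve in $\overline{G}$, whence $\text{Lie}(G) \subseteq \text{Lie}(\overline{G})$; since the o-minimal Lie algebra of the algebraic group $\overline{G}$ coincides with its Zariski Lie algebra by \cite[Lem.2.4]{PS}, it is algebraic, and minimality of $a(\cdot)$ yields the containment.

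For the reverse inclusion, let $H$ be the connected algebraic subgroup of $\GL(n,R)$ with $\text{Lie}(H)=a(\text{Lie}(G))$. I would aim to show $G\oo\subseteq H$, since then $\overline{G\oo}\subseteq H$, and combining this with the identification $\text{Lie}(\overline{G})=\text{Lie}(\overline{G\oo})$ (valid because $\overline{G\oo}$ is Zariski closed, irreducible, and of finite index in $\overline{G}$, hence is the algebraic identity component of $\overline{G}$) gives $\text{Lie}(\overline{G})\subseteq a(\text{Lie}(G))$. To prove $G\oo\subseteq H$, set $K=G\oo\cap H$, a closed definable subgroup of $G\oo$, and check that $\text{Lie}(K\oo)=\text{Lie}(G\oo)$; the nontrivial direction asks, for each $X\in \text{Lie}(G)$, for a definable curve tangent to $X$ at $e$ that lies both in $G\oo$ (automatic) and in $H$. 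This is where I expect the main obstacle to lie, and I would resolve it by invoking the o-minimal one-parameter subgroup machinery developed in \cite{PPS00}: the one-parameter subgroup of $\GL(n,R)$ generated by $X\in \text{Lie}(G)\subseteq \text{Lie}(H)$ is definable and lies simultaneously in both $G\oo$ and $H$, hence in $K$. An application of Fact \ref{LieAlgChar} then forces $K\oo=G\oo$, so $G\oo\subseteq H$.

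For the furthermore part, assume $G$ is connected. If $\overline{G}=V_1\cup V_2$ were a proper Zariski decomposition, then $G=(G\cap V_1)\cup (G\cap V_2)$ would write $G$ as a union of two proper closed definable subsets, contradicting definable connectedness; hence $\overline{G}$ is irreducible. For normality, observe that for each $g\in G$ the inner automorphism $c_g$ preserves $G$, so $\text{Ad}(g)$ preserves $\text{Lie}(G)$. Since the condition $\text{Ad}(g)\cdot \text{Lie}(G)=\text{Lie}(G)$ is Zariski-closed in $g\in \overline{G}$ and holds on $G$, it holds throughout $\overline{G}$. Then for any $g\in \overline{G}$, $c_g(G)$ is a connected definable subgroup of $\overline{G}$ with the same Lie algebra as $G$, so a final appeal to Fact \ref{LieAlgChar} gives $c_g(G)=G$, i.e.\ $G\trianglelefteq \overline{G}$.
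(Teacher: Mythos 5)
This statement is not proved in the paper at all: it is imported verbatim as Fact \ref{zariskiclo} from \cite[Prop.3.9]{BBO}, so there is no in-paper argument to compare against and your proposal has to stand on its own. Two of your three pieces do stand: the inclusion $a(\text{Lie}(G))\subseteq \text{Lie}(\overline{G})$ is correct as written, and the normality argument (the stabilizer of the subspace $\text{Lie}(G)$ under $\mathrm{Ad}$ is Zariski closed and contains $G$, hence contains $\overline{G}$; then Fact \ref{LieAlgChar} applied to $G$ and $gGg^{-1}$) is exactly the right kind of argument. The irreducibility step is slightly off as stated --- a union of two proper closed definable subsets does \emph{not} contradict definable connectedness (think of $[0,2]=[0,1]\cup[1,2]$); you need to use that the irreducible components of an algebraic group are the finitely many pairwise \emph{disjoint} cosets of its identity component, hence clopen in the definable topology, so the connected set $G$ lies in the one containing $e$. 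That is a fixable slip.

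The genuine gap is in the reverse inclusion. Your key step --- producing, for each $X\in\text{Lie}(G)$, a definable curve through $e$ tangent to $X$ and lying in $G\cap H$ --- is discharged by invoking ``the one-parameter subgroup of $\GL(n,R)$ generated by $X$,'' and no such object exists in this setting. Over a general real closed field there is no exponential map, and even over $\R$ in the pure field $\exp(tX)$ is not semialgebraic for generic $X$; more to the point, one-dimensional subalgebras of $\text{Lie}(G)$ need not integrate to definable subgroups at all (e.g.\ in $R_{>0}\times R$ the line spanned by $(a,b)$ with $a,b\neq 0$ is the Lie algebra of no definable subgroup in the pure field). This failure of the Lie correspondence is precisely the obstruction the whole paper is organized around, so it cannot be waved away by citing \cite{PPS00}. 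A workable repair avoids curves entirely: since $\mathrm{Ad}(g)$ for $g\in G$ preserves $\g:=\text{Lie}(G)$, it preserves $a(\g)$ by minimality, so $G$ normalizes the connected algebraic group $H$ with $\text{Lie}(H)=a(\g)$; the quotient map $q:GH\rightarrow GH/H$ is a definable homomorphism with $dq(\g)\subseteq dq(a(\g))=0$, hence $q(G)$ has trivial Lie algebra and is finite, and connectedness of $G$ gives $G\subseteq H$, whence $\overline{G}\subseteq H$ and $\text{Lie}(\overline{G})\subseteq a(\g)$. Your overall architecture (reduce to $G\oo\subseteq H$, then apply Fact \ref{LieAlgChar}) is fine; it is only the mechanism for getting $G\oo$ inside $H$ that must be replaced.
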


The \emph{solvable radical} $R(G)$ of a definable group $G$ is the maximal normal solvable connected definable subgroup of $G$. We say that $G$ is \emph{semisimple} if $R(G)$ is trivial. A non-abelian definable group $G$ is \emph{definably simple} if it has no proper non-trivial normal definable subgroup. We recall a basic result of semisimple groups:

\begin{fact}\cite{HPP}\label{semisimple} Let $G$ be a connected definable group. Then $G$ is semisimple if and only if its Lie algebra $\g$ is semisimple. In this case, we have that $G'=G$, the center $Z(G)$ is finite, and $G/Z(G)$ is definably isomorphic to a direct product of finitely many definably simple groups.
\end{fact}

We finish with some topological remarks on o-minimal groups. Given a connected definable group $G$ in the o-minimal structure $\mathcal{R}$, we define the \emph{o-minimal $n$-homotopy group} $\pi_n^{\mathcal{R}}(G)$ as in the classical case via definable maps and definable homotopies pointed in the identity element \cite[\S 4]{BO}. We say that $G$ is \emph{simply-connected} if $\pi^{\mathcal{R}}_1(G)=1$.

If $\mathcal{R}_1$ is an elementary extension of $\mathcal{R}$ and $G(R_1)$ is the realization of $G$ in $\mathcal{R}_1$, then $\pi^{\mathcal{R}_1}_n(G(R_1))$ and $\pi_n^{\mathcal{R}}(G)$ are canonically isomorphic, so henceforth we shall omit the superscript and we will write $\pi_n(G)$. For, it can be deduced from the following stronger result which will be crucial in our work:

\begin{fact}\cite[Thm.3.1, Cor.4.4]{BO}\label{homotopy} Let $X\subseteq R^n$ and $Y\subseteq R^m$ be connected semialgebraic sets defined over $\Q$. Then every continuous map $f:X\rightarrow Y$ definable in $\mathcal{R}$ is definably homotopic to a semialgebraic map $g:X\rightarrow Y$ defined over $\Q$. If $g_1,g_2:X\rightarrow Y$ are two continuous semialgebraic maps defined over $\Q$ which are definably homotopic, then they are semialgebraically homotopic over $\Q$. 

In particular, the o-minimal $n$-homotopy group $\pi_n(X)$ is canonically isomorphic to the classical homotopy group $\pi_n(X(\R))$ of the realization of $X$ in the real numbers.
\end{fact}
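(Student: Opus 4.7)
The plan is to prove both assertions by combining o-minimal triangulation with simplicial approximation, upgraded so that all auxiliary data lives over $\Q$. First I would establish a $\Q$-rational refinement of the o-minimal triangulation theorem: any semialgebraic set $X$ defined over $\Q$ admits a triangulation by a finite affine simplicial complex with all vertices in $\Q^n$. This should be proved by tracking rationality through a cell decomposition adapted to the polynomials defining $X$ (which have coefficients in $\Q$), so that the roots used to cut cells and the chosen representative points inside cells can all be taken in $\Q$.

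For the first assertion, given a continuous definable $f:X\rightarrow Y$ and fixed $\Q$-rational triangulations $K$ of $X$ and $L$ of $Y$, I would iteratively barycentrically subdivide $K$ (an operation that preserves $\Q$-rationality) until each closed simplex of $K$ maps under $f$ into the open star of some vertex of $L$; this is possible by a Lebesgue-number-type argument applied to the definable open cover of $X$ by the preimages of the open stars of $L$. Then define $g:X\rightarrow Y$ by sending each vertex of $K$ to a canonically chosen vertex of $L$ whose star contains its image under $f$, and extending affinely on each simplex of $K$. The resulting $g$ is piecewise-linear with vertices in $\Q^n$, hence semialgebraic over $\Q$. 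A definable homotopy from $f$ to $g$ is built pointwise: for each $x\in X$, both $f(x)$ and $g(x)$ lie in a common open star of $L$, which is star-shaped and thus supplies a canonical definable contraction joining them. For the second assertion I would apply the same method to a definable homotopy $H:X\times[0,1]\rightarrow Y$ relative to the subspace $X\times\{0,1\}$, where $H$ already agrees with the semialgebraic maps $g_1,g_2$ over $\Q$; the approximation can be arranged to preserve these boundary values, producing a $\Q$-semialgebraic homotopy between $g_1$ and $g_2$.

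For the final "In particular" statement, apply the two assertions with $X=S^n$ (which is semialgebraic over $\Q$) and $Y$ the given space: the first yields surjectivity and the second injectivity of the natural map from the set of $\Q$-semialgebraic pointed homotopy classes of maps $S^n\rightarrow Y$ into $\pi_n(Y)$; this set of $\Q$-semialgebraic homotopy classes is then identified with the classical $\pi_n(Y(\R))$ by the Delfs-Knebusch comparison between $\Q$-semialgebraic and continuous homotopy classes over $\R$. The principal obstacle is securing the $\Q$-rationality at every stage, both in the triangulation and in the simplicial approximation; once rational triangulations are in hand the remainder is the o-minimal analogue of standard piecewise-linear approximation, where continuity and uniform continuity on definably compact pieces replace the classical compactness arguments.
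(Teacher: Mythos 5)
This statement is quoted in the paper as an external result (\cite[Thm.3.1, Cor.4.4]{BO}); the paper gives no proof of it, so the comparison has to be with the argument in that reference, which proceeds via normal triangulations and the Delfs--Knebusch semialgebraic homotopy theory rather than by classical simplicial approximation.

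Your proposal has a genuine gap at its central step: the Lebesgue-number argument used to justify simplicial approximation fails over a general real closed field $R$. The triangulations $K$ and $L$ have vertices in $\Q^n$, so after $k$ barycentric subdivisions the mesh of $K$ is still a \emph{standard} positive rational (it shrinks only by factors of $n/(n+1)$). But the definable open cover of $X$ by the sets $f^{-1}(\mathrm{st}(v))$ may have an infinitesimal Lebesgue number when $R$ is non-archimedean, or indeed no usable one at all, since $f$ is only assumed definable in the o-minimal expansion $\mathcal{R}$ and can oscillate at an infinitesimal scale (e.g.\ a piecewise-linear $f:[0,1]\rightarrow[0,1]$ that runs from $0$ to $1$ and back inside an interval of infinitesimal length). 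No finite amount of barycentric subdivision of a $\Q$-rational complex then places every simplex inside a star, so the approximating map $g$ cannot be constructed this way; the same obstruction hits your relative version for the second assertion. In addition, $X$ is only assumed connected, not closed and bounded, so even the definable substitute for compactness is not available as stated. This is precisely the difficulty that \cite{BO} circumvents: there one first triangulates compatibly with the fibers of $f$ (normal triangulations), obtains a definable homotopy to a map determined by simplicial data over $\Q$ without any metric smallness argument, and only then invokes the Delfs--Knebusch invariance of semialgebraic homotopy sets under real closed base change. Your treatment of the final ``in particular'' clause via the comparison of $\Q$-semialgebraic and topological homotopy classes over $\R$ is the right idea and matches the reference, but it rests on the two assertions whose proposed proofs do not go through.
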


Similarly, we can define the o-minimal $n$-homotopy group of a locally definable group $\mathcal{G}$ and again we have invariance under elementary extensions. As it happens with Lie groups, the fundamental group interacts with map coverings: an onto locally definable homomorphism $p:\mathcal{G}\rightarrow \mathcal{H}$ is  a \emph{locally definable covering} if there is a family of open definable subsets $\{U_j\}_{j\in J}$ of $\mathcal{H}=\bigcup_{n\in \N}H_n$ whose union is $\mathcal{H}$, each $H_n$ is contained in the union of finitely many $U_j$, and each $p^{-1}(U_j)$ is a disjoint union of open definable subsets of $\mathcal{G}$ each of which is mapped homeomorphically onto $U_j$.

\begin{fact}\label{nocover}Let $\mathcal{G}$ and $\mathcal{H}$ be a connected and a simply-connected locally definable group respectively, and let $f:G\rightarrow H$ be a locally definable surjective homomorphism. If $dim(\text{ker}(f))=0$ then $f$ is an isomorphism.
\end{fact}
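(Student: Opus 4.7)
The plan is to show that $K := \ker(f)$ is trivial, whence $f$ is a bijective local homeomorphism and therefore an isomorphism of locally definable groups. First I would check that $K$ is a discrete compatible subgroup of $\mathcal{G}$: for each $n$, $K \cap X_n = (f\res X_n)^{-1}(e_{\mathcal{H}})$ is definable, and the assumption $\dim(K)=0$ forces each slice $K \cap X_n$ to be finite, so $K$ is locally finite in $\mathcal{G}$.

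Next I would argue that $f$ is a local homeomorphism. Vanishing of $\mathrm{Lie}(K)$ makes $df_e\colon \mathrm{Lie}(\mathcal{G}) \to \mathrm{Lie}(\mathcal{H})$ injective, and the surjectivity of $f$ together with the finiteness of the fibres of each $f\res X_n$ yields $\dim(\mathcal{G}) = \dim(\mathcal{H})$, so $df_e$ is in fact a linear isomorphism. The o-minimal inverse function theorem, applied to a definable restriction of $f$ on some $X_n$ near $e$, then produces a definable open neighborhood $U$ of $e$ in $\mathcal{G}$ with $U \cap K = \{e\}$ that is mapped homeomorphically onto a definable open neighborhood of $e$ in $\mathcal{H}$. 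Translating by elements of $\mathcal{G}$ propagates this to a local homeomorphism at every point, so that $f$ is in fact a locally definable covering in the sense of the definition preceding the statement.

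It remains to show $K = \{e\}$. Discreteness of $K$ and connectedness of $\mathcal{G}$ force $K \subseteq Z(\mathcal{G})$, since for any $k \in K$ the conjugation orbit $\{gkg^{-1} : g \in \mathcal{G}\}$ is a connected subset of the discrete set $K$ and thus the singleton $\{k\}$. I expect the principal obstacle to sit precisely at this last step, in excluding nontrivial discrete central covers from within the locally definable category; my approach is to invoke the theory of locally definable coverings developed in \cite{E05}, which, combined with the connectedness of both $\mathcal{G}$ and $\mathcal{H}$ and the fact that $\mathcal{G}$ itself belongs to the locally definable category, forces $K = \{e\}$ and hence that $f$ is an isomorphism.
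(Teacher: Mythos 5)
There is a genuine gap, and it sits exactly where you predicted. Your first two steps (that $\ker(f)$ is a compatible zero-dimensional subgroup and that $f$ is a locally definable covering) agree with the paper, which gets the covering property directly from \cite[Thm.3.6]{E05}. But your final step --- that connectedness of $\mathcal{G}$ and $\mathcal{H}$, plus centrality of the discrete kernel, ``forces $K=\{e\}$'' --- is false as stated. A one-dimensional definably compact torus $T$ with the map $x\mapsto 2x$ is a surjective definable (hence locally definable) homomorphism between connected groups with zero-dimensional nontrivial kernel; likewise the universal cover of such a $T$ is a locally definable covering homomorphism with infinite discrete central kernel. No amount of covering theory rules these out from connectedness alone.

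The missing ingredient is simple-connectedness of $\mathcal{H}$. The statement of the Fact omits this hypothesis, but the paper's proof uses it explicitly, and every application in the paper is to a simply-connected target: once $f$ is known to be a locally definable covering, \cite[Prop.3.4 and 3.12]{E05} give
$$\ker(f)\cong \pi_1(\mathcal{H})/f_*(\pi_1(\mathcal{G})),$$
and $\pi_1(\mathcal{H})=1$ then kills the kernel. So to complete your argument you should add the hypothesis $\pi_1(\mathcal{H})=1$ and replace your last paragraph by this monodromy computation; your intermediate observations (discreteness, local homeomorphism, centrality of $K$) are correct but are subsumed by the citation of \cite[Thm.3.6]{E05} and are not by themselves enough to conclude.
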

\begin{proof}Since $\dim(\text{ker}(f))=0$, the map $f$ is a locally definable covering \cite[Thm.3.6]{E05}. Therefore, by \cite[Prop.3.4 and 3.12]{E05},  
$$\text{ker}(f)=\pi_1(\mathcal{H})/f_*(\pi_1(\mathcal{G})),$$ where 
$f_*:\pi_1(G)\rightarrow \pi_1(H):[\gamma] \mapsto [f\circ \gamma]$. Since $\pi_1(\mathcal{H})=1$ we get that $\text{ker}(f)=1$, as desired. \end{proof}

\section{Linear groups}\label{s:linear}

If $G$ is a connected linear Lie subgroup of $\GL(n,\R)$ then $G'$ is a closed subgroup. Indeed, if $\g$ denotes the Lie algebra of $G$ then we have that $G'$ is a connected virtual subgroup of $G$ whose Lie algebra is $[\g,\g]$. On the other hand, $[\g,\g]$ is the Lie algebra of an algebraic subgroup $H$ of $\GL(n,R)$. Therefore,  $G'$ and $H\oo$ have the same Lie algebra, so they are equal (see Ch.1 \S 2 and Ch.4 \S1 in \cite{OV}). It follows that $G'$ is closed in $G$.

We cannot adapt the above argument to prove that if $G$ is a linear o-minimal group then $G'$ is a definable subgroup. Though $G'$ is a connected locally definable subgroup of $G$, it is not true that connected locally definable subgroups of $G$ are uniquely determined by their Lie algebra. For example, the group $R$ and its finite elements Fin$(R)$ have the same Lie algebra.
\begin{thm}\label{linear}Let $G\leq\GL(n,R)$ be a connected definable group in $\mathcal{R}$. Then $G'$ is connected semialgebraic subgroup of $G$ whose Lie algebra is $[\g,\g]$. Moreover, the commutator width of $G$ is bounded by $\dim(G)+\dim(G')-\dim(G'')$.
\end{thm}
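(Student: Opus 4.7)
The plan is to reduce the definability of $G'$ to Pillay's semialgebraic result (Fact \ref{Pillay}) via the Zariski closure of $G$. Let $\overline{G}\leq\GL(n,R)$ denote this Zariski closure; by Fact \ref{zariskiclo}, $\overline{G}$ is an irreducible algebraic subgroup containing $G$ as a normal subgroup, and $\text{Lie}(\overline{G})=a(\g)$ with $\g=\text{Lie}(G)$. Applying Fact \ref{Pillay} to the semialgebraic group $\overline{G}$, the derived subgroup $\overline{G}'$ is semialgebraic, connected, and has commutator width in $\overline{G}$ at most $\dim\overline{G}$.

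The crucial geometric step is to show $\overline{G}'\subseteq G$. Set $H:=G\cap\overline{G}'$; since $\overline{G}/\overline{G}'$ is abelian and $G$ is normal in $\overline{G}$, the quotient $G/H$ embeds in $\overline{G}/\overline{G}'$ and is therefore abelian, whence $G'\subseteq H$. By connectedness of $G'$ (Fact \ref{Derlocally}), $G'\subseteq H\oo$. Computing Lie algebras, and using the Preliminary identity $[a(\g),a(\g)]=[\g,\g]$, one gets $\text{Lie}(\overline{G}')=[a(\g),a(\g)]=[\g,\g]$ and $\text{Lie}(H\oo)=\g\cap\text{Lie}(\overline{G}')=\g\cap[\g,\g]=[\g,\g]$. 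Thus $H\oo$ and $\overline{G}'$ are connected definable subgroups of $\overline{G}$ sharing the same Lie algebra, so Fact \ref{LieAlgChar} forces $H\oo=\overline{G}'$. In particular $\overline{G}'\subseteq G$, so $\overline{G}'$ is a semialgebraic subgroup of $G$ containing $G'$.

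To conclude $G'=\overline{G}'$, combine the containment $G'\subseteq\overline{G}'$ with $[\g,\g]\subseteq\text{Lie}(G')$ (Fact \ref{Derlocally}) to obtain $\text{Lie}(G')=\text{Lie}(\overline{G}')$, so $\dim G'=\dim\overline{G}'$. The increasing chain of connected definable subsets $[G,G]_n\subseteq\overline{G}'$ with union $G'$ then has dimension eventually equal to $\dim\overline{G}'$, say at $n_0$, whence $[G,G]_{n_0}$ has non-empty interior in $\overline{G}'$. Since $[G,G]_n^{-1}=[G,G]_n$, the set $[G,G]_{2n_0}$ contains a neighborhood of the identity in $\overline{G}'$, and connectedness of $\overline{G}'$ yields $\overline{G}'=[G,G]_N$ for some $N$. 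Hence $G'=\overline{G}'$ is semialgebraic and connected.

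For the commutator width bound $\dim G+\dim G'-\dim G''$, a quantitative version of the previous paragraph is required, and this is the main obstacle. One must bound $n_0$ by tracking the strict dimension increments of the sequence $\dim[G,G]_n$, and control the final spreading step that covers $\overline{G}'$. Invoking Pillay's bound applied to the semialgebraic linear group $G'$ itself (now known to be semialgebraic) controls $G''$ via commutators in $G'$ (hence in $G$) of length at most $\dim G'$, contributing the term $\dim G'-\dim G''$; the remaining contribution $\dim G$ comes from the initial stage, in the spirit of \cite{P} and \cite{BJO}. The delicate point is keeping the commutators inside $G$ rather than the easier ambient $\overline{G}$, which would only yield the cruder bound $\dim\overline{G}$.
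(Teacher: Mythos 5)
Your reduction via the Zariski closure and Fact \ref{Pillay} is the paper's starting point as well, and your containments $G'\subseteq (\overline{G}')\oo\subseteq G$ are essentially right (two small repairs: $\overline{G}'$ is Zariski-irreducible but need not be definably connected over a real closed field, so you must work with $(\overline{G}')\oo$ throughout; and the identity $\text{Lie}(H\oo)=\g\cap\text{Lie}(\overline{G}')$ you invoke is not available --- only ``$\subseteq$'' is automatic for intersections --- though the conclusion $\text{Lie}(H\oo)=[\g,\g]$ can be rescued from Fact \ref{Derlocally}, since $[\g,\g]\subseteq\text{Lie}(G')\subseteq\text{Lie}(H\oo)\subseteq\text{Lie}(\overline{G}')=[\g,\g]$). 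Your dimension-stabilization argument for $G'=(\overline{G}')\oo$ can also be made to work for bare definability, although as written it still needs the standard but nontrivial facts that a full-dimensional symmetric definable subset of a connected definable group generates it in finitely many steps and that this number can be extracted; the paper avoids all of this.

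The genuine gap is the commutator width bound, which you concede you have not proved. Your accounting ($\dim G$ from an unspecified ``initial stage'', $\dim G'-\dim G''$ from Pillay applied to $G'$) does not work: Fact \ref{Pillay} gives width at most $\dim G'$ for $G'$, not $\dim G'-\dim G''$, and nothing in your argument bounds the number of commutators of elements of $G$ (rather than of $\overline{G}$) needed to exhaust $G'$ modulo $G''$. The missing idea --- which also delivers definability for free and replaces your stabilization argument --- is that with $G_1:=\overline{G}\oo$ and $G_2:=(\overline{G}')\oo$ one has $G_1'\trianglelefteq G_2$, hence $[G_1/G_2']'=G_1'/G_2'$ is abelian, hence $G/G_2'$ is \emph{solvable}. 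The solvable-case theorem \cite[Thm.3.1]{BJO} then yields that $G'/G_2'$ is definable and connected with commutator width at most $\dim(G/G_2')$; after identifying $G'=G_2$ (so $G''=G_2'$) this gives width $\dim G-\dim G''$ for $G/G''$, and adding Pillay's bound $\dim G'$ for $G_2$ produces exactly $\dim G+\dim G'-\dim G''$. Without this passage through the solvable quotient, the ``Moreover'' clause of the theorem is not established.
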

\begin{proof}Let $\mathfrak{g}$ be the Lie algebra of $G$. By Fact \ref{zariskiclo} the Zariski closure $H:=\overline{G}\leq \GL(n,R)$ of $G$ is an irreducible algebraic subgroup of $\GL(n,R)$ whose Lie algebra $\h:=\text{Lie}(H)$ equals $a(\g)$. The derived subgroup $H'$ of $H$, is also an irreducible algebraic group with $\text{Lie}(H')=[a(\g),a(\g)]=[\g,\g]$, see \cite[Prop.\,7.8]{Bo}.  Denote $G_1:=H\oo$ and $G_2:=[H']\oo$, which are connected semialgebraic subgroups of $\GL(n,R)$. Since $\text{Lie}(G_2)=[\g,\g] \subseteq \g$ it follows from Fact \ref{LieAlgChar} that
$$G_2 \trianglelefteq G \trianglelefteq G_1.$$
We prove that $G'$ equals the connected semialgebraic group $G_2$. By Fact \ref{Pillay} the groups $G'_1$ and $G'_2$ are both semialgebraic and connected. Thus, the quotient $G_1/G_2$ is abelian since $G'_1=[G_1,G_1]=[H\oo,H\oo]\trianglelefteq [H,H]\oo=G_2$. In particular $G/G_2$ is abelian, so that $G'\leq G_2$.

On the other hand, consider the connected definable group $G/G'_2$ and note that it is non-necessarily linear. However, it is solvable. Indeed, we already showed above that $G'_1 \trianglelefteq G_2$, so that 
$[G_1/G'_2]'=G'_1/G'_2\trianglelefteq G_2/G'_2$ is abelian. Then $[G/G'_2]'\trianglelefteq [G_1/G'_2]'$ is abelian and therefore $G/G'_2$ is solvable, as desired. Thus, by \cite[Thm.3.1]{BJO} we deduce that $[G/G'_2]'=G'/G'_2$ is definable and connected, and the commutator width of $G/G'_2$ is bounded by $\dim(G/G'_2)$. In particular, the equivalence classes of the definable quotient $G'/G'_2$ form a definable family of subsets of $G$ and therefore its union $G'$ is definable, as required. Moreover, since both $G'_2$ and $G'/G'_2$ are connected, $G'$ is also connected.

Finally, by Fact \ref{Derlocally} we have that 
$$\text{Lie}(G_2)=\text{Lie}([H']\oo)=\text{Lie}(H')=[\g,\g]\subseteq \text{Lie}(G')$$
and therefore $G'=G_2$ by Fact \ref{LieAlgChar}. In particular, $\text{Lie}(G')=\text{Lie}(G_2)=[\g,\g]$. Note that $G''=G'_2$ and thus by the above we have that the commutator width of $G/G''$ is bounded by $\dim(G/G'')=\dim(G)-\dim(G'')$.  Hence, the commutator width of $G$ is bounded by $\dim(G)-\dim(G'')+\dim(G')$. Indeed, for any $x\in G'$ there are $y_1,\ldots,y_\ell\in [G,G]_1$ such that
$$y_1^{-1}\cdots y_\ell^{-1}x\in G''$$ where  $\ell:=\dim(G)-\dim(G'')$. On the other hand, by Fact \ref{Pillay} the commutator width of $G'=G_2$ is bounded by $m:=\dim(G')$, so there are $z_1,\ldots,z_m \in  [G',G']_1\subseteq [G,G]_1$ such that $y_1^{-1}\cdots y_\ell^{-1}x=z_1\cdots z_m$ and therefore $x=y_1\cdots y_\ell z_1 \cdots z_m\in [G,G]_{\ell+m}$, as required.\end{proof}

We complete the linear case by considering non-connected definable linear groups:
\begin{corollary}Let $G$ be a linear group definable in an o-minimal structure, and $A$ and
$B$ be two definable subgroups which normalize each other. Then the subgroup $[A,B]$ is definable and $[A, B]\oo=[A\oo,B][A,B\oo]$. Furthermore, any element of $[A,B]\oo$ can be expressed as the product of at most $dim([A,B]\oo)$ commutators from $[A\oo,B]$ or $[A,B\oo]$ whenever $A\oo$ or
$B\oo$ is solvable. 
\end{corollary}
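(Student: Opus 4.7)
The plan reduces the non-connected case to Theorem~\ref{linear} applied to the connected linear subgroup $A\oo B\oo = (AB)\oo$, and handles the finitely many cosets of $A/A\oo$ and $B/B\oo$ via commutator identities and the descending chain condition. Since $A$ and $B$ normalize each other, $C := AB$ is a definable subgroup of $G$, and $A\oo, B\oo$ are characteristic in $A, B$ hence normal in $C$, so that $C\oo = A\oo B\oo$ is a connected linear definable subgroup. A repeatedly useful elementary observation is $[A,B] \leq A \cap B$: for $a\in A$, $b\in B$, $[a,b] = a^{-1}(a^b) \in A$ (as $A$ is normalized by $B$) and symmetrically $[a,b]\in B$; in particular $K:= [A\oo, B][A, B\oo]$ is contained in $A \cap B$.

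I would then show that $N_1 := [A\oo, B]$ and $N_2 := [A, B\oo]$ are connected, normal, definable subgroups of $C$. Writing $B = \bigsqcup_j b_j B\oo$ and using the identity $[x, yz] = [x,z][x,y]^z$, each commutator $[a, b_j b']$ with $a \in A\oo$, $b' \in B\oo$ decomposes as $[a, b']\cdot [a, b_j]^{b'}$, whose first factor lies in $[A\oo, B\oo]$ and whose second is a conjugate of a commutator with a fixed coset representative $b_j$. Theorem~\ref{linear} applied to $A\oo B\oo$ gives that $(A\oo B\oo)'$ is a definable connected subgroup, hence so is $[A\oo,B\oo]$ up to a dimension argument; since the finitely many ``frame'' pieces are connected definable subsets through the identity, a dcc argument identifies $N_1$ as a definable connected normal subgroup of $C$. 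The argument for $N_2$ is symmetric. Hence $K = N_1 N_2$ is a normal connected definable subgroup of $C$ contained in $[A,B]$.

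In the quotient $C/K$, writing $\bar x$ for images, $\overline{A\oo}$ centralizes $\bar B$ and $\bar A$ centralizes $\overline{B\oo}$, so the commutator map $A \times B \to C/K$ factors through the finite group $(A/A\oo) \times (B/B\oo)$, and $\overline{[A,B]}$ is generated by the finite set $\bar F = \{\overline{[a_i,b_j]}\}$. Since $\overline{[A,B]} \leq \overline{A \cap B}$, whose connected component lies in $\overline{(A\cap B)\oo}$, a dcc argument shows that the subgroup generated by $\bar F$ is finite. Therefore $[A,B]$ is a finite extension of $K$, so definable, with $[A,B]\oo = K = [A\oo, B][A, B\oo]$. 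For the commutator width bound, when $A\oo$ (or $B\oo$) is solvable, $[A\oo, B] \leq A\oo \cap B$ is solvable, so \cite[Thm.~3.1]{BJO} bounds the number of commutators from $[A\oo, B]$ needed to express an element of $N_1$ by $\dim(N_1)$; combining with the symmetric estimate for $N_2$ via the decomposition $[A,B]\oo = N_1 N_2$ gives the bound by $\dim([A,B]\oo)$.

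The main obstacle I anticipate is establishing finiteness (rather than mere finite generation) of $[A,B]/K$, which requires a delicate dcc argument inside $(A\cap B)/K$ exploiting linearity of $G$; a secondary subtlety is combining the separate BJO-style bounds on $N_1$ and $N_2$ into the single dimension $\dim([A,B]\oo)$, rather than the a priori larger $\dim(N_1\oo)+\dim(N_2\oo)$.
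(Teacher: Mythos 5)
Your overall strategy---reduce to the connected linear group $A\oo B\oo$ and handle the finitely many cosets of $A\oo$ and $B\oo$ by commutator identities---is in spirit what must happen, but the paper does not redo this bookkeeping: its entire proof is the observation that \cite[Thm.~3.1]{BJO} already delivers the full statement (definability of $[A,B]$, the identity $[A,B]\oo=[A\oo,B][A,B\oo]$, and the width bound) for any pair of definable subgroups normalizing each other, provided $H:=A\oo B\oo$ satisfies a condition $(*)$: whenever $K$ is a normal definable subgroup of $H$ with $H/K$ a central extension of a definably simple group, $(H/K)'=H'K/K$ is definable. Since $H$ is connected and linear, Theorem~\ref{linear} gives that $H'$ is definable, so $(*)$ holds and the corollary follows in two lines.

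Your attempt to rebuild that machinery by hand has gaps at exactly the points where the hard work lives. First, definability of $[A\oo,B\oo]$ does not follow from definability of $(A\oo B\oo)'$ ``up to a dimension argument'': the sets $[A\oo,B\oo]_n$ are increasing connected definable sets whose dimensions stabilize, but stabilization of dimension does not force stabilization of the sets---this is precisely the failure mode in Conversano's example, where every $[G,G]_n$ has full dimension yet $G'$ is not definable. Second, and for the same reason, there is no ``dcc argument'' showing that the subgroup generated by finitely many connected definable subsets through the identity is definable; o-minimal structures have no Zilber-type indecomposability theorem, so your identification of $N_1=[A\oo,B]$ as a definable subgroup is unsupported. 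Third, finiteness of $[A,B]/K$ does not follow from $\overline{[A,B]}$ being generated by the finite set $\bar F$: a finitely generated subgroup can be infinite (one element of infinite order suffices), and containment in $\overline{A\cap B}$ bounds nothing; an actual argument is needed (e.g.\ exploiting that $\bar F$ is a finite normal subset whose elements centralize $\overline{A\oo B\oo}$, together with torsion considerations). Finally, as you note yourself, your width estimate comes out as a sum of two dimensions rather than $\dim([A,B]\oo)$. All of these difficulties are exactly what \cite[Thm.~3.1]{BJO} packages once and for all; the only new input required here is Theorem~\ref{linear}, used to verify its hypothesis $(*)$.
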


\begin{proof}It suffices to prove that $H:=A\oo B\oo$ satisfies condition $(*)$ of \cite[Thm.3.1]{BJO}. That is, if $K$ is a normal definable subgroup of $H$ such that $H/K$ is the central extension of a definable simple group then $(H/K)'=H'K/K$ is definable. Since $H$ is linear and connected, the latter follows from Theorem \ref{linear}.
\end{proof}

\section{Simply-connected definable groups}\label{s:simplyconnected}

A. Malcev proved the existence of cross-sections of quotients of simply-connected Lie groups by normal closed subgroups. This is a key result that, for example, it allows to study central extensions of simply-connected Lie groups via analytic sections \cite{H}.
\begin{fact}\cite{M}\label{Malcev} Let $G$ be a simply-connected Lie group and let $H\trianglelefteq G$ be a closed connected subgroup. Let $\pi:G\rightarrow G/H$ be the natural homomorphism. Then there exists an analytic mapping $\sigma:G/H\rightarrow H$ such that $\pi \circ \sigma=id$. 
\end{fact}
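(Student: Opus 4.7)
The plan is to follow Malcev's classical argument, proceeding by induction on $\dim H$. A preliminary observation is that simple-connectedness propagates downward and sideways: applying the long exact homotopy sequence of the fibration $H \to G \to G/H$, together with Cartan's theorem that $\pi_2$ of any Lie group vanishes, one obtains $\pi_1(H) = 0 = \pi_1(G/H)$. Thus both $H$ and $G/H$ are again simply-connected, which is precisely what is needed to iterate.

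The inductive reduction is then clean: suppose there exists a proper closed connected subgroup $N \subsetneq H$ which is normal in $G$. The sequence $1 \to N \to G \to G/N \to 1$ together with the remark above shows that $G/N$ is simply-connected, and $H/N$ is a closed connected normal subgroup of $G/N$ of strictly smaller dimension than $H$. Applying the induction hypothesis first to $N \trianglelefteq G$ and then to $H/N \trianglelefteq G/N$, one obtains analytic sections $\sigma_2 \colon G/N \to G$ and $\sigma_1 \colon G/H \to G/N$, and their composition $\sigma = \sigma_2 \circ \sigma_1$ is the desired analytic section.

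One is thus reduced to the \emph{minimal} case, where $H$ admits no proper nontrivial closed connected subgroup normal in $G$. Standard structure theory for normal subgroups forces $H$ to be either a vector group $\R^n$ with $G$ acting irreducibly on $\mathrm{Lie}(H)$, or a product of copies of a single simple simply-connected Lie group. The semisimple case is handled by a theorem of Mostow that yields a closed complement $K \le G$ with $G = H \rtimes K$; projection onto the $H$-factor in this decomposition is analytic and supplies the section. For the abelian case, one chooses a linear complement $\fl$ of $\mathrm{Lie}(H)$ in $\mathrm{Lie}(G)$ compatible with a Levi decomposition $G = R \rtimes S$, and constructs the section through exponential coordinates, exploiting that a simply-connected solvable Lie group is analytically diffeomorphic to a Euclidean space.

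The main obstacle is the minimal abelian case: although $H \cong \R^n$, the fibration $\pi \colon G \to G/H$ is generally nontrivial as a principal $H$-bundle, so the section cannot be read off from any a priori splitting. Simple-connectedness of $G$ and $G/H$ is essential here, as it is precisely what eliminates the global cohomological obstructions (classes in $H^1(G/H;H)$) to assembling local exponential sections into a single global analytic one.
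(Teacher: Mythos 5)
First, a point of order: the paper does not prove this statement. It is quoted from Malcev's 1942 paper as Fact \ref{Malcev}, and the author explicitly declines to reproduce the argument because it ``goes through the $1$-$1$ correspondence between Lie algebras and simply-connected Lie groups.'' So there is no in-paper proof to match yours against; the closest comparison is the proof of the o-minimal analogue, Theorem \ref{Cor2}, which inducts on $\dim(G)$, splits $G/H$ via a Levi decomposition, disposes of the solvable quotient by contractibility (Corollary \ref{Cor1} plus Lemma \ref{contractible}), removes $R(H)$ using the Peterzil--Starchenko section theorem for contractible kernels, and finishes the semisimple case by exhibiting a definably simple factor mapping isomorphically onto $G/H$. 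Your skeleton (induction on $\dim H$, reduction to a minimal normal $H$, case split into $H\cong \R^n$ and $H$ simple) is a legitimate alternative, and the preliminary steps --- propagation of simple-connectedness via $\pi_2=0$ and the composition of sections in the inductive reduction --- are correct.

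The genuine gap is that you never prove the minimal abelian case: your final paragraph names it as ``the main obstacle'' and then gestures at the vanishing of obstruction classes in $H^1(G/H;H)$ without constructing anything. Moreover, the mechanism you invoke is not the right one. For a principal bundle whose structure group is the contractible group $\R^n$, a continuous (indeed smooth) global section exists over \emph{any} paracompact base, since the relevant sheaf of germs of $H$-valued functions is fine and the corresponding $\check{H}^1$ vanishes; so simple-connectedness of $G/H$ cannot be ``precisely what eliminates'' that obstruction. The actual difficulty --- and what Malcev's argument delivers --- is an \emph{analytic} section, for which partitions of unity are unavailable; the classical route is through canonical coordinates adapted to a linear complement of $\mathrm{Lie}(H)$ in $\mathrm{Lie}(G)$ and the Lie algebra--group correspondence, i.e.\ exactly the tool the paper points out is unavailable o-minimally. (Your semisimple case is essentially fine, although the splitting $G\cong H\times C$ comes from the fact that a semisimple ideal is complemented by its centralizer together with simple-connectedness of $G$, rather than from Mostow.) As written, the proposal is a plan whose central step is left open.
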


Note that with the above notation,
$$\begin{array}{lcr}
G\rightarrow (G/H)\times H\\
x \mapsto (\pi(x), x^{-1}\sigma(\pi(x)))
\end{array}
$$
is a homeomorphism and therefore both $H$ and $G/H$ are simply-connected. We are interested in an o-minimal version of this consequence because it will allow us to make arguments by induction. However, the proof in  \cite{M} goes through the $1$-$1$ correspondence between Lie algebras and simply-connected Lie groups, which it is not available in the o-minimal context.

We follow another approach. E. Cartan proved in \cite{Cartan} that any connected Lie group has trivial second homotopy group. His proof again goes through Lie's third fundamental theorem. W. Browder \cite{Br} later gave an alternative proof --using just homological methods-- which is also valid for H-spaces with finitely generated homology. Recall that a topological space $X$ is an \emph{H-space} if there exists a continuous map $f:X\times X \rightarrow X$ and an element $e\in X$ such that both $f(-,e)$ and $f(e,-)$ are homotopic to the identity map $\text{id}: X\mapsto X$. 

\begin{lemma}\label{pi2}Let $G$ be a connected definable group $G$. Then $\pi_2(G)=0$. 
\end{lemma}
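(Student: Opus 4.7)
My plan is to reduce the statement to a classical topological problem and then invoke W.~Browder's theorem, which asserts that any H-space with finitely generated homology has trivial second homotopy group. The bridge between the o-minimal world and the classical one will be Fact~\ref{homotopy}.

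First, using o-minimal triangulation I will produce a definable homeomorphism $h\colon G\to X$ onto a semialgebraic set $X\subseteq R^m$ defined over $\Q$, and arrange that $h(e_G)=e\in X(\Q)$. Since $\pi_2$ is a topological invariant, $\pi_2(G)\cong \pi_2(X)$, and Fact~\ref{homotopy} gives $\pi_2(X)\cong \pi_2(X(\R))$. It thus suffices to show $\pi_2(X(\R))=0$.

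Next, I will transport the group multiplication on $G$ through $h$ to obtain a definable continuous map $m\colon X\times X\to X$ satisfying $m(x,e)=x=m(e,x)$. Fact~\ref{homotopy} yields a semialgebraic map $\tilde m\colon X\times X\to X$ defined over $\Q$ that is definably homotopic to $m$. Restricting $\tilde m$ along $X\times\{e\}$ and $\{e\}\times X$ gives semialgebraic maps over $\Q$ that are definably homotopic to the identity; applying Fact~\ref{homotopy} a second time upgrades these to semialgebraic homotopies defined over $\Q$. Specializing to $\R$ then exhibits $X(\R)$ as an H-space with product $\tilde m(\R)$ and identity $e$. Since $X(\R)$ is a semialgebraic subset of Euclidean space, it is homeomorphic to a finite simplicial complex, so $H_\ast(X(\R);\Z)$ is finitely generated. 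Browder's theorem thus yields $\pi_2(X(\R))=0$ and hence $\pi_2(G)=0$.

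The main obstacle I expect is executing the H-space transfer rigorously: one must ensure that the semialgebraic approximation $\tilde m$ inherits, up to homotopy over $\R$, the identity axioms of the original multiplication. Fact~\ref{homotopy} is exactly what makes this upgrade from definable homotopies to semialgebraic-over-$\Q$ homotopies possible, and is therefore the crux of the reduction.
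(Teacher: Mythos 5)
Your proposal is correct and follows essentially the same route as the paper: triangulate $G$ over $\Q$, use Fact~\ref{homotopy} to replace the multiplication by a semialgebraic map over $\Q$ whose unit maps are semialgebraically homotopic to the identity, realize over $\R$ to obtain an H-space with finitely generated homology, and conclude by Browder's theorem. The only cosmetic difference is that the paper works directly with $G=|K|(R)$ rather than transporting the structure through a homeomorphism $h$.
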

\begin{proof}By the Triangulation theorem we can assume that there is a finite simplicial complex $K$ with vertices over $\Q$ such that $G=|K|(R)$, where $|K|(R)$ denotes the realization of $K$ in $R$. Moreover, we can assume that the identity of $G$ is one of the vertices.

By Fact \ref{homotopy}, the group operation on $|K|(R)$ is definably homotopic to a continuous semialgebraic map $f:|K|(R)\times |K|(R)\rightarrow |K|(R)$ which is defined over $\Q$. Furthermore, both $f(-,e)$ and $f(-,e)$ are clearly definably homotopic to the identity map $\text{id}$, so again by Fact \ref{homotopy} both are also semialgebraically homotopic to the $\text{id}$ over $\Q$. Thus, we can consider $f^{\R}:|K|(\R)\times |K|(\R)\rightarrow |K|(\R)$, the realization of $K$ and $f$ over the real numbers. The polyhedron $|K|(\R)$ with the map $f^{\R}$ is an H-space. Moreover, since $K$ is a finite simplicial complex the homology groups of $|K|(\R)$ are clearly finitely generated. Thus, by \cite[Thm.6.11]{Br} we have that $\pi_2(|K|(\R))=0$ and in particular $\pi_2(G)=0$, as required.
\end{proof}

A continuous definable map $p:E\rightarrow B$ is a \emph{definable fibration} if $p$ has the homotopy lifting property
with respect to all definable sets, i.e. for every definable set $X$, for every definable
homotopy $H: X \times I \rightarrow B$ and for every definable map $g : X \rightarrow E$ such that
$p\circ g=H(-,0)$ there is a definable homotopy $H_1 : X \times I \rightarrow  E$ such that $p\circ H_1=H$ and $H_1(-, 0)=g(-)$.

\noindent With the above lemma and the fact that the projection map of quotients of definable groups are definable fibrations we get:

\begin{proposition}\label{propsimplycom}Let $G$ be a connected definable group, and let $H$ be a normal connected definable subgroup of $G$. Then $G$ is simply-connected if and only if both $H$ and $G/H$ are simply-connected.
\end{proposition}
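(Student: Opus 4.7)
The plan is to exploit the long exact sequence of homotopy groups coming from the definable fibration $\pi\colon G \to G/H$, whose existence as a definable fibration was noted in the paragraph preceding the statement. Since the long exact sequence is a formal consequence of the homotopy lifting property, and since the o-minimal $n$-homotopy groups $\pi_n$ are defined in terms of definable maps and definable homotopies from spheres (or cubes) modulo basepoint conditions, the standard constructive proof goes through verbatim in the definable category: one builds the connecting map $\pi_{n+1}(G/H)\to \pi_n(H)$ by lifting a definable representative to a definable path in $G$ and taking its endpoint, and checks exactness at each spot using iterated applications of definable homotopy lifting. This yields
\[
\pi_2(G/H) \longrightarrow \pi_1(H) \longrightarrow \pi_1(G) \longrightarrow \pi_1(G/H) \longrightarrow \pi_0(H).
\]

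From this the proposition is a pure diagram chase. Since $H$ is connected, $\pi_0(H)=0$, so the map $\pi_1(G)\to\pi_1(G/H)$ is surjective. For the direction $(\Leftarrow)$: assuming $\pi_1(H)=0=\pi_1(G/H)$, the middle term $\pi_1(G)$ is sandwiched between trivial groups and is therefore trivial. For the direction $(\Rightarrow)$: if $\pi_1(G)=0$, the surjection $\pi_1(G)\twoheadrightarrow\pi_1(G/H)$ immediately gives $\pi_1(G/H)=0$; to conclude $\pi_1(H)=0$ we use that $G/H$ is itself a connected definable group, so Lemma \ref{pi2} applies and gives $\pi_2(G/H)=0$, whence the exactness of $\pi_2(G/H)\to\pi_1(H)\to\pi_1(G)$ with trivial outer terms forces $\pi_1(H)=0$.

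The main obstacle is providing (or at least pinpointing) the long exact sequence of a definable fibration; all the rest is bookkeeping. There are two reasonable routes. One is to set up the exact sequence intrinsically, as above, by mimicking the classical topological construction with definable lifting at every step; this is the cleanest and is what I would favor. Alternatively, one can triangulate $G$, $H$, and $G/H$ over $\Q$ in compatible fashion (as in the proof of Lemma \ref{pi2}), use Fact \ref{homotopy} to identify the o-minimal homotopy groups with the classical homotopy groups of the $\R$-realizations, and then apply the classical long exact sequence of the resulting topological fibration of real polyhedra. Either route delivers the sequence, after which the conclusion is immediate from Lemma \ref{pi2} and the connectedness of $H$.
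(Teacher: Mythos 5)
Your argument is correct and is essentially the paper's own proof: both rest on the long exact homotopy sequence of the definable fibration $G\rightarrow G/H$ (the paper obtains it from the o-minimal homotopy sequence of the pair $(G,H)$ together with the isomorphisms $\pi_n(G,H)\cong\pi_n(G/H)$ for $n\geq 2$ from \cite[Thm.4.9]{BO}, rather than reconstructing it by hand), followed by killing $\pi_2(G/H)$ via Lemma \ref{pi2} and using connectedness of $H$. The resulting short exact sequence $0\rightarrow \pi_1(H)\rightarrow \pi_1(G)\rightarrow \pi_1(G/H)\rightarrow 0$ and the final diagram chase are identical.
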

\begin{proof}By \cite[Cor.2.4]{BMO} the projection map $G\rightarrow G/H$ is a definable fibration. Therefore, by \cite[Thm.4.9]{BO}, for each $n\geq 2$, the o-minimal homotopy groups $\pi_n(G,H)$ and $\pi_n(G/H)$ are
isomorphic. In particular, we have the following long exact sequence via the o-minimal homotopy sequence of the pair $(G,H)$, see \cite[\S 4]{BO},
$$\pi_{2}(G/H)\rightarrow \pi_1(H) \rightarrow \pi_1(G)\rightarrow \pi_1(G/H)\rightarrow 0.$$
Since by Lemma \ref{pi2} we have that $\pi_2(G/H)=0$, we obtain the exact sequence
$$0\rightarrow \pi_1(H) \rightarrow \pi_1(G)\rightarrow \pi_1(G/H)\rightarrow 0.$$
Therefore,  $\pi_1(G)=0$ if and only if both $\pi_{1}(H)=0$ and $\pi_{1}(G/H)=0$, as required.
\end{proof}

Once we have that normal connected definable subgroups and their quotients are also simply-connected, we will be able to make induction arguments. For example, we have the following consequence analogue to the classical one:

\begin{corollary}\label{Cor1}Let $G$ be a connected solvable definable group. Then the following are equivalent:
\begin{enumerate}
 \item $G$ is torsion-free.
 \item $G$ is definably diffeomorphic to $R^{\, \dim(G)}$.
 \item $G$ is simply-connected. 
\end{enumerate} 
In particular, if $G$ is simply-connected then any connected definable subgroup is simply-connected.
\end{corollary}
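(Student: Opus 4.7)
The plan is to prove the cyclic chain \((2) \Rightarrow (1)\), \((2) \Rightarrow (3)\), \((1) \Rightarrow (2)\), and \((3) \Rightarrow (1)\). The first two are immediate, since \((R^{\dim G}, +)\) is torsion-free and, by Fact \ref{homotopy}, simply-connected, and both properties transfer across a definable diffeomorphism. The implication \((1) \Rightarrow (2)\) is the known structure theorem for connected torsion-free solvable definable groups in o-minimal expansions of real closed fields: one iteratively extends by normal one-dimensional subgroups, all forced to be copies of \((R,+)\) by torsion-freeness, and trivializes each extension via a definable cross-section. I will invoke it as a known fact from the literature.

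The substantive step is \((3) \Rightarrow (1)\), which I prove by induction on \(\dim(G)\). For the base case \(\dim(G)=1\), a connected one-dimensional definable group is definably isomorphic either to \((R,+)\) or to a one-dimensional definably compact group, whose fundamental group is \(\Z\); only the former is simply-connected, and it is torsion-free. For the inductive step, I seek a proper nontrivial connected definable normal subgroup \(H \trianglelefteq G\). If \(G\) is non-abelian, take \(H:=G'\), which is definable by \cite[Thm.3.1]{BJO} applied to the solvable group \(G\); if \(G\) is abelian, the descending chain condition on definable subgroups yields a minimal infinite definable subgroup whose connected component gives a one-dimensional (hence proper, since \(\dim G>1\)) connected definable subgroup, automatically normal by abelianness. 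In either case, by Proposition \ref{propsimplycom} both \(H\) and \(G/H\) are simply-connected and of strictly smaller dimension, hence torsion-free by induction. Any torsion element \(g\in G\) then projects to a torsion element of \(G/H\), forcing \(g\in H\); since \(H\) is torsion-free, \(g=e\), so \(G\) is torsion-free.

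The ``in particular'' clause is then immediate from the equivalence: if \(G\) is simply-connected it is torsion-free by \((3)\Rightarrow(1)\), and any connected definable subgroup \(H\le G\) is connected, solvable (as a subgroup of a solvable group), and torsion-free (as a subgroup of a torsion-free group), so simply-connected by \((1)\Rightarrow(3)\). The main potential obstacle is justifying \((1)\Rightarrow(2)\) as a black box with a clean reference; the new inductive argument itself is short and relies cleanly on Proposition \ref{propsimplycom} (to transfer simple-connectedness to subgroups and quotients) together with the standard projection trick for torsion elements in an extension of torsion-free groups.
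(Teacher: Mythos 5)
Your overall strategy for $(3)\Rightarrow(1)$ — pass to a proper nontrivial connected normal definable subgroup $H$, apply Proposition \ref{propsimplycom} and induction to get $H$ and $G/H$ torsion-free, and conclude via the extension argument — is the same as the paper's, and your non-abelian step (take $H=G'$, definable and proper by \cite{BJO}) and the ``in particular'' clause are correct. The gap is in the abelian case. You claim that the descending chain condition produces a minimal infinite definable subgroup whose connected component is a \emph{proper} one-dimensional connected subgroup whenever $\dim(G)>1$. Neither half of this is justified: a minimal infinite definable subgroup of an abelian definable group need not be one-dimensional, and, more importantly, it can be all of $G$ — there exist connected abelian definable groups of dimension at least $2$ with no proper infinite definable subgroups whatsoever (suitable definably compact ``tori'' are the standard examples). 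Such groups are of course not simply-connected, but your argument never invokes simple-connectedness to produce the subgroup, and at this stage of the proof you do not yet know $G$ is torsion-free or contractible, so you cannot exclude this configuration; the induction then has nothing to recurse on.

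The paper closes exactly this case by a different device: for abelian $G$ it considers the power maps $f_n:G\rightarrow G,\ g\mapsto g^n$, notes that $\ker(f_n)<G$ by \cite[Prop.6.1]{Str}, and distinguishes two cases. If some $\ker(f_n)\oo$ is proper and nontrivial, one inducts as you do. Otherwise $\ker(f_n)\oo=1$ for all $n$, so each $f_n$ is a surjective locally definable homomorphism with zero-dimensional kernel from the simply-connected group $G$ onto itself, and Fact \ref{nocover} forces $\ker(f_n)=1$, i.e.\ $G$ has no $n$-torsion for any $n$. This is the missing step, and it also subsumes your $\dim(G)=1$ base case, so no separate classification of one-dimensional groups is needed. (A minor further point: your base case leans on ``torsion-free one-dimensional implies definably isomorphic to $(R,+)$,'' which is stronger than necessary; the torsion-free versus definably compact dichotomy together with $\pi_1\cong\Z$ for the definably compact case would suffice.)
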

\begin{proof} 1) implies 2) follows from \cite[Cor.\,5.7]{PS05} and 2) implies 3) is obvious. Let us prove by induction on the dimension that 3) implies 1), the initial case is obvious. Since $G$ is solvable, by \cite[Thm.\,4.1]{BJO} we have that $G'$ is a normal connected definable proper subgroup of $G$. If $G'\neq 1$ then by Proposition \ref{propsimplycom} and by the induction hypothesis we get that both $G'$ and $G/G'$ are torsion-free. In particular, $G$ is also torsion-free. 

Thus, we can assume that $G$ is abelian. We consider the definable homomorphism $f_n:G\rightarrow G:g\mapsto g^n$ for each $n\in \N$. By \cite[Cor.\,4.5]{BEM} we have that $\ker(f_n)\oo=1$ for all $n\in \N$. Thus, by Fact \ref{nocover}, we also have that $\ker(f_n)=1$ for all $n\in \N$, as required.

Finally, suppose that $G$ is simply-connected, and let $H$ be a connected definable subgroup of $G$. By the above equivalences, we have that $G$ is torsion-free, so that $H$ is also a connected torsion-free solvable definable group. Thus, $H$ is simply-connected, as required.\end{proof}

In order to prove the definability of the commutator subgroup of simply-connected groups we will be concerned with the following configuration: a definable central extension of a semisimple definable group. These extensions were profoundly studied in \cite{HPP}:

\begin{fact}\cite[Cor.5.3]{HPP}\label{f:HPP} Let $G$ be a connected central extension of a semisimple definable group. Then for each $n$, the set $Z(G)\cap [G,G]_n$ is finite. 
\end{fact}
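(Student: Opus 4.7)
My plan is to reduce the problem to showing that $Z\cap [G,G]_n$ is finite for each $n$, where $Z$ denotes the central subgroup with $S:=G/Z$ semisimple, and then deduce the general statement using that $[Z(G):Z]$ is finite. First, by Fact~\ref{semisimple} I write $S\simeq C_1\times\cdots\times C_\ell$ with each $C_i$ definably simple, so each $Z(C_i)$ is a proper normal definable subgroup, hence finite; in particular $Z(S)$ is finite. The projection $\pi\colon G\to S$ sends $Z(G)$ into $Z(S)$, so $[Z(G):Z]<\infty$. For any $g\in Z(G)$, if $gz_0,gz\in [G,G]_n$ with $z_0,z\in Z$, then $zz_0^{-1}=(gz)(gz_0)^{-1}\in Z\cap [G,G]_{2n}$ (using that $z,z_0$ are central). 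Hence finiteness of $Z(G)\cap[G,G]_n$ follows from finiteness of $Z\cap [G,G]_m$ for all $m$.

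Next I would analyze the Lie algebra $\g=\text{Lie}(G)$. Setting $\mathfrak{z}=\text{Lie}(Z)$, the algebra $\g$ is a central extension of the semisimple Lie algebra $\g/\mathfrak{z}$ by the abelian ideal $\mathfrak{z}$. By Whitehead's lemma this extension splits, yielding a Levi decomposition $\g=\mathfrak{s}'\oplus \mathfrak{z}$ with $\mathfrak{s}'$ semisimple; thus $[\g,\g]=[\mathfrak{s}',\mathfrak{s}']=\mathfrak{s}'$ and $[\g,\g]\cap\mathfrak{z}=0$.

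The key step is to prove that $\text{Lie}(G')=[\g,\g]$. Fact~\ref{Derlocally} gives the containment $[\g,\g]\subseteq \text{Lie}(G')$; for the reverse containment I would analyze curves in $G'$ of the form $t\mapsto \prod_i[x_i(t),y_i(t)]$ and show via a Taylor expansion at the identity (generalising equation~(\ref{e:comm}) from single commutators to products) that their tangent vectors at $t=0$ lie in the span of brackets $\gamma(X,Y)=[X,Y]$, which is exactly $[\g,\g]$. Combined with $[\g,\g]\cap\mathfrak{z}=0$ this yields $\text{Lie}(G'\cap Z)=0$, so the locally definable subgroup $G'\cap Z$ has dimension $0$. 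Since $[G,G]_n\cap Z$ is a definable subset of the $0$-dimensional subgroup $G'\cap Z$, it cannot contain a positive-dimensional arc (such an arc would yield a non-zero tangent vector at $e\in G'\cap Z$ after left-translation). Hence $[G,G]_n\cap Z$ is finite, and combined with the first paragraph this concludes the argument.

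The main obstacle is the reverse inclusion $\text{Lie}(G')\subseteq[\g,\g]$. Since $G'$ is only locally definable and we lack an o-minimal analogue of Lie's third theorem, we cannot simply integrate $[\g,\g]$ to a subgroup and compare abstractly via Fact~\ref{LieAlgChar}; the argument has to stay local, analyzing the manifold structure of $G$ near $e$ together with the Taylor expansions of iterated commutator maps.
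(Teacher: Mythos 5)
The paper offers no proof of this statement: it is imported verbatim from \cite[Cor.5.3]{HPP}, so there is nothing internal to compare against, and your proposal has to stand on its own. As it stands it is not a proof but a reduction of the Fact to a claim that is at least as hard and is left unproved. The preparatory steps are fine: the passage from $Z(G)$ to the central kernel $Z$ using finiteness of the centre of the semisimple quotient, and the Lie-algebra computation $[\g,\g]\cap\mathfrak{z}=0$ via the Levi decomposition in characteristic zero, are both correct. The gap is exactly the step you flag as the main obstacle, the inclusion $\text{Lie}(G')\subseteq[\g,\g]$, and your sketched route to it does not go through. A definable $C^1$-curve $\sigma$ in $[G,G]_n$ with $\sigma(0)=e$ need not admit a lift to definable $C^1$-curves $x_i,y_i$ based at the identity: definable choice produces a definable lift, but o-minimality only guarantees that the lifted curves have limits in the closure of $G$ in $R^N\cup\{\infty\}$ as $t\to 0$, and even when those limits lie in $G$ the individual commutators $[x_i(0),y_i(0)]$ need not be trivial, so equation (\ref{e:comm}) --- an expansion at the identity --- cannot be applied factor by factor. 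Moreover the differential of the commutator map at $(e,e)$ vanishes, so tangent vectors of $[G,G]_n$ at $e$ arise either from second-order terms or from the differential of $G^{2n}\rightarrow G$ at non-identity points of the fibre over $e$; controlling the image of that differential is precisely where the classical argument integrates $[\g,\g]$ to a virtual subgroup containing $G'$ via Lie's third theorem, which the paper emphasizes is unavailable here.

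Note also that the statement you want to extract from $\text{Lie}(G')\subseteq[\g,\g]$, namely $\dim(G'\cap Z)=0$, is \emph{equivalent} to the Fact (a countable increasing union of definable sets has dimension $0$ if and only if each of them is finite), and it is exactly the consequence the paper draws from the Fact in Proposition \ref{miconf}. So the hard content has been relocated rather than proved. The actual argument in \cite{HPP} proceeds quite differently, through the structure theory of definable central extensions of semisimple groups rather than through the tangent space of $G'$; if you want a self-contained proof you will need either that machinery or a genuinely new argument for $\text{Lie}(G')\subseteq[\g,\g]$.
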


\begin{prop}\label{miconf}Let $G$ be a simply-connected definable group such that $R(G)=Z(G)^\text{\emph{o}}$. Then $G'$ is definable and simply-connected. 
\end{prop}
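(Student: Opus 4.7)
The plan is to exhibit a locally definable isomorphism between $G'$ and the definable semisimple quotient $\bar G := G/Z(G)\oo$, and then transport definability and simply-connectedness back to $G'$. The hypothesis $R(G)=Z(G)\oo$ says precisely that $\bar G=G/R(G)$ is semisimple, and applying Proposition \ref{propsimplycom} to the normal connected definable subgroup $Z(G)\oo$ of the simply-connected group $G$ yields that both $Z(G)\oo$ and $\bar G$ are simply-connected.

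Let $\pi: G \to \bar G$ be the projection. I would study its restriction $\pi|_{G'}: G' \to \bar G$, which is a locally definable homomorphism since, by Fact \ref{Derlocally}, $G'$ is a connected locally definable subgroup of $G$. Surjectivity is immediate: $\bar G = \bar G'$ by Fact \ref{semisimple}, and each commutator $[\bar a,\bar b]$ in $\bar G$ lifts to a commutator $[a,b]$ in $G'$. The kernel is $G' \cap Z(G)\oo$, which is a compatible subset of $G'$: each intersection $[G,G]_n \cap Z(G)\oo$ lies in $[G,G]_n \cap Z(G)$, and this is finite by Fact \ref{f:HPP}. Hence $\pi|_{G'}$ is a surjective locally definable homomorphism between connected locally definable groups with zero-dimensional kernel.

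Fact \ref{nocover}, applied using the simply-connectedness of $\bar G$ just established, now yields that $\pi|_{G'}$ is an isomorphism of locally definable groups. The inverse $\sigma: \bar G \to G'$ is a locally definable map whose domain is a single definable piece, so $\sigma(\bar G) \subseteq [G,G]_n$ for some $n$; surjectivity of $\sigma$ forces $G' = [G,G]_n$, proving $G'$ is definable. Finally, simply-connectedness of $G'$ is transferred from $\bar G$ via the isomorphism. The most delicate ingredient is the simply-connectedness of $\bar G$, which is precisely why Proposition \ref{propsimplycom} (a substitute for Lie's third fundamental theorem) is indispensable here; once one has it, Fact \ref{f:HPP} and Fact \ref{nocover} combine almost mechanically to produce the isomorphism.
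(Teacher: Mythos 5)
Your proposal is correct and follows essentially the same route as the paper: restrict the projection $\pi:G\to G/Z(G)\oo$ to the locally definable subgroup $G'$ (Fact \ref{Derlocally}), get surjectivity from semisimplicity of the quotient (Fact \ref{semisimple}), zero-dimensionality of the kernel from Fact \ref{f:HPP}, simply-connectedness of the quotient from Proposition \ref{propsimplycom}, and conclude via Fact \ref{nocover}. Your extra remark explaining why the locally definable isomorphism forces $G'=[G,G]_n$ for some $n$ is a welcome elaboration of a step the paper leaves implicit.
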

\begin{proof}
By Fact \ref{Derlocally} the derived subgroup $G'$ of $G$ is a connected locally definable group of $G$ and the projection map 
$$\pi\upharpoonright_{G'}:G'\rightarrow G/Z(G)\oo$$
is a locally definable homomorphism because the restriction of $\pi$ to each $[G,G]_n$ is clearly definable. Since $R(G)=Z(G)\oo$, the connected definable group $G/Z(G)\oo$ is semisimple and therefore $[G/Z(G)\oo]'=G/Z(G)\oo$ by Fact \ref{semisimple}, so that $\pi\upharpoonright_{G'}$ is surjective. 

Moreover, by Fact \ref{f:HPP} the compatible subgroup $\text{ker}(\pi\upharpoonright_{G'})=G'\cap Z(G)\oo$ of $G'$ has dimension $0$. Therefore, since $G/Z(G)\oo$ is simply-connected by Proposition \ref{propsimplycom},  it follows from  Fact \ref{nocover}  that $\pi\upharpoonright_{G'}$ is a definable isomorphism, as required.
\end{proof}

We already have all the ingredients to prove the definability of the derived subgroup of simply-connected definable groups. 

\begin{fact}\cite[Cor.4.3]{BJO}\label{rudimentary} Let $G$ be a definable group and let $A,B$ be normal connected definable subgroups of $G$ with $[A,B]\leq Z(B)$ or $[A,B]\leq Z(A)$. Then $[A,B]$ is a connected definable subgroup of $G$.
\end{fact}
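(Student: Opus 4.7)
The plan is to prove Fact \ref{rudimentary} by combining a standard commutator identity with a maximum-dimension argument, avoiding any Lie-theoretic input. Using $[A,B]=[B,A]$ to swap the roles of $A$ and $B$, I first reduce to the case $[A,B]\leq Z(B)$, in which $[A,B]$ lies inside the abelian group $Z(B)$ and is therefore itself abelian.

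Next, for each fixed $a\in A$, I verify that the map
$$\phi_a:B\longrightarrow [A,B],\qquad b\longmapsto [a,b]$$
is a definable group homomorphism. Indeed, using $b_1^{-1}ab_1=a[a,b_1]$ together with the fact that $[a,b_1]\in Z(B)$ commutes with $b_2$,
$$[a,b_1b_2]=a^{-1}b_2^{-1}b_1^{-1}ab_1b_2=a^{-1}b_2^{-1}a[a,b_1]b_2=[a,b_2][a,b_1],$$
and since $[A,B]$ is abelian the order is irrelevant. Consequently the image $B_a:=\phi_a(B)=\{[a,b]:b\in B\}$ is a connected definable subgroup of $[A,B]$.

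Now $[A,B]$ is the subgroup of $G$ generated by $\bigcup_{a\in A}B_a$, and since these subgroups commute pairwise (being contained in the abelian $[A,B]$), every finite product $B_{a_1}\cdots B_{a_k}$ is again a subgroup, definable and connected as the continuous image of $B_{a_1}\times\cdots\times B_{a_k}$ under multiplication. The dimensions of these finite products are bounded by $\dim(G)$, so I choose $a_1,\ldots,a_k\in A$ with $H:=B_{a_1}\cdots B_{a_k}$ of maximal dimension.

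To conclude, I claim $H=[A,B]$: for any $a\in A$, $B_aH$ is a connected definable subgroup containing $H$, so by the maximality of $\dim(H)$ we have $\dim(B_aH)=\dim(H)$. Two connected definable subgroups of $G$ with an inclusion and a common dimension must coincide, since the quotient is a $0$-dimensional, hence finite, definable set on which the larger connected group acts transitively by translation and must therefore be a singleton. Hence $B_a\leq H$ for every $a\in A$, yielding $[A,B]=H$, definable and connected. The only nontrivial ingredient is the commutator identity verifying that $\phi_a$ is a homomorphism; the rest is a routine maximum-dimension argument of the sort used throughout o-minimal and stable group theory, and replaces the Lie-theoretic ``closed virtual subgroup'' reasoning available classically.
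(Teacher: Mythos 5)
Your proposal is correct and follows essentially the same route as the paper's proof: the same commutator identity $[a,b_1b_2]=[a,b_2][a,b_1]^{b_2}=[a,b_2][a,b_1]$ showing each $[a,B]_1$ is a connected definable subgroup, followed by taking a finite product of such subgroups of maximal dimension and observing it must absorb every $[a,B]_1$. The only difference is that you spell out the maximal-dimension absorption step in more detail than the paper does.
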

\begin{proof}For the sake of the presentation, we include a proof in case that $[A,B]\leq Z(B)$, the other one is similar. For any $a\in A$ and $b_1,b_2\in B$  we have $[a,b_1b_2]=[a,b_2][a,b_1]^{b_2}=[a,b_2][a,b_1]=[a,b_1][a,b_2]$. Thus, the set $[a,B]_1$ is a group, which is also definable and connected since it is the image of the continuous homomorphism $B\rightarrow B:b\mapsto [a,b]$. In particular, for any $a_1,\ldots,a_\ell\in A$ we have that
$$[a_1,B]_1\cdots[a_\ell,B]_1$$
is a connected definable subgroup of $B$. Therefore $[A,B]$ equals any such finite product of maximal dimension, so it is definable and connected.
\end{proof}

\begin{thm}\label{MainTheoConnected}
Let $G$ be a simply-connected group definable in an o-minimal structure, 
$A$ and $B$ be two normal connected definable subgroups of $G$. 
Then $[A,B]$ is a normal connected definable subgroup of $G$.
\end{thm}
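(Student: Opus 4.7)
The plan is to reduce to the configuration $R(G)=Z(G)\oo$ handled by Proposition \ref{miconf} and then run an analogue of that argument with $[A,B]$ in place of $G'$. Normality and connectedness of $[A,B]$ are routine: $g[a,b]g^{-1}=[gag^{-1},gbg^{-1}]\in[A,B]$ since $A,B\trianglelefteq G$, and each $[A,B]_n$ is the continuous image of the connected definable set $(A\times B)^n$, so as in Fact \ref{Derlocally}, $[A,B]=\bigcup_n[A,B]_n$ is a connected normal locally definable subgroup of $G$. Using Proposition \ref{propsimplycom}, $AB$ is itself simply-connected and $[A,B]$ computed in $G$ agrees with $[A,B]$ computed in $AB$, so I may assume $G=AB$.

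The bulk of the argument is an induction on $\dim G$, with $\dim G=0$ trivial. If $R(G)=1$, then Fact \ref{semisimple} gives $G\simeq C_1\times\cdots\times C_\ell$ with each $C_i$ definably simple and non-abelian, so every normal connected definable subgroup of $G$ is a product of some of the $C_i$'s; writing $A=\prod_{i\in I_A}C_i$ and $B=\prod_{i\in I_B}C_i$ yields $[A,B]=\prod_{i\in I_A\cap I_B}C_i$, which is definable. If $G$ is solvable, then Corollary \ref{Cor1} makes $G$ torsion-free and the solvable machinery of \cite{BJO} delivers $[A,B]$ as a definable connected subgroup.

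In the target configuration $R(G)=Z(G)\oo$ the argument parallels Proposition \ref{miconf}. The quotient $\pi\colon G\to G/Z(G)\oo$ lands in a semisimple simply-connected group, so by the semisimple case $[\pi(A),\pi(B)]$ is a product of some of the simple factors, hence definable and simply-connected (using Proposition \ref{propsimplycom} on $G/Z(G)\oo$). The kernel of $\pi|_{[A,B]}$ sits inside $G'\cap Z(G)$, which is zero-dimensional by Fact \ref{f:HPP}, so Fact \ref{nocover} forces $\pi|_{[A,B]}$ to be a locally definable isomorphism onto a definable target, yielding definability of $[A,B]$. The main obstacle is the reduction of the general mixed case $1\neq R(G)\neq G$ to this configuration; a natural attack is a secondary induction on $\dim R(G)-\dim Z(G)\oo$ using a normal connected definable subgroup $N\leq R(G)$ --- for example the connected component of the last nontrivial term of the derived series of $R(G)$, which is abelian and $G$-normal --- and passing to the still simply-connected quotient $G/N$ to obtain $[A,B]N$ definable. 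The genuinely delicate point is extracting $[A,B]$ from $[A,B]N$: when $N\leq Z(G)\oo$ one can replace $A,B$ by $AN,BN$ without changing $[A,B]$ and iterate; when $N$ is non-central one seems to need either Fact \ref{rudimentary} on a suitable central piece or a further Fact \ref{nocover}-style lift exploiting the simple-connectedness of $[A,B]N$.
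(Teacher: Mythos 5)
Your outline correctly disposes of the extreme cases (semisimple via the product decomposition, solvable via \cite{BJO} and Corollary \ref{Cor1}, and the configuration $R(G)=Z(G)\oo$ by mimicking Proposition \ref{miconf}), but the heart of the theorem is precisely the reduction you leave open, and the device you propose for it does not close the gap. Quotienting by a normal connected definable $N\leq R(G)$ and invoking induction in $G/N$ only yields that $[A,B]N/N$, hence $[A,B]N$, is definable; since $N$ need not be contained in $[A,B]$, this says nothing about $[A,B]$ itself. Your two suggested repairs do not work as stated: replacing $A,B$ by $AN,BN$ when $N$ is central leaves $[A,B]$ unchanged but also leaves it exactly as undetermined as before (there is nothing to iterate on), and for non-central $N$ you only gesture at Fact \ref{rudimentary} or Fact \ref{nocover} without a concrete mechanism. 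So the proposal, as written, has a genuine gap at its central step.

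The paper closes this gap by arranging that the subgroup one quotients by is itself contained in $[A,B]$. It takes a counterexample minimizing \emph{both} $\dim(G)$ and $d=\min(\dim A,\dim B)$. If some normal connected definable $C\leq A\cap B$ with $\dim C<d$ fails to centralize, say, $B$, then by minimality of $d$ the group $[C,B]$ is already definable, connected, nontrivial, normal, and --- crucially --- contained in $[A,B]$; passing to $G/[C,B]$ (still simply-connected by Proposition \ref{propsimplycom}) and using minimality of $\dim(G)$ gives $[A,B]/[C,B]$ definable, hence $[A,B]$ definable. If no such $C$ exists, either $A\cap B\leq Z(A)\cap Z(B)$ and Fact \ref{rudimentary} finishes, or one reduces to $A=A\cap B$, then to $A=B=G$ (quotienting by $A'\leq[A,B]$, again a subgroup of the commutator), landing in the situation where every proper normal connected definable subgroup of $G$ is central; then either $G$ is solvable (handled by \cite{BJO}) or $R(G)=Z(G)\oo$ and Proposition \ref{miconf} applies. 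I would suggest you look for an argument of this ``quotient by something inside $[A,B]$'' type rather than quotienting by pieces of $R(G)$. (A secondary remark: your semisimple case uses that every normal connected definable subgroup of $C_1\times\cdots\times C_\ell$ is a subproduct, which is true but needs justification and is not actually required by the paper's route, since its reductions end with $A=B=G$.)
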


\begin{proof}Let $G$ be a potential counterexample to our statement of minimal dimension. Note that $\dim(G)>2$ because otherwise $G$ is abelian. 
Let $A$ and $B$ be two normal connected definable subgroups of $G$ for which $[A,B]$ is either non-definable or definable but non-connected, and with
$$d:=\min(\dim(A),\dim(B))\geq 1$$ 
minimal. By Proposition \ref{propsimplycom} the normal definable subgroup $AB$ of $G$ is simply-connected and therefore $G=AB$. Note that since $A$ and $B$ are normal in $G$ we have that $[A,B]\trianglelefteq {A\cap B}$.

\begin{claim}\label{ClaimI} There is not a normal connected definable subgroup $C$ of $G$ contained in $A\cap B$ with
$\dim(C)<d$ and $C\nleq{Z(A)\cap Z(B)}$.
\end{claim}
\begin{proof}Suppose there exists such a subgroup $C$, and say it does not centralize $B$. Since $\dim(C)<d$ it follows that $[C,B]$ is a normal connected definable subgroup of $G$. Moreover, $[C,B]$ is non-trivial because $C\nleq Z(B)$.

Notice that $[C,B]$ is normal in $G$ and therefore by Proposition \ref{propsimplycom} we have that $G/[C,B]$  is simply-connected.
Denote by ``$\overline{\phantom{H}}$" the quotients by $[C,B]$. Since $G$ was the minimal counterexample, we get that 
$[\overline{A},\overline{B}]$ is definable and connected. But clearly
$[\overline{A},\overline{B}]=\overline{[A,B]}=[A,B]/[C,B]$ 
since $[C,B]\leq [A,B]$, and it follows that $[A,B]$ is definable and connected, a contradiction.
\end{proof}

\begin{claim}\label{Claim2}
We may assume $A={A\cap B}\nleq {Z(A)\cap Z(B)}$. 
\end{claim}
\begin{proof}If ${(A\cap B)}\leq{Z(A)\cap Z(B)}$ then $[A,B]\leq{Z(A)\cap
Z(B)}$. By Fact \ref{rudimentary} we obtain that  $[A,B]$ is definable and connected, a contradiction. Hence, we have that ${A\cap B}\nleq {Z(A)\cap Z(B)}$. 

On the other hand, by Claim \ref{ClaimI} we get that 
$\dim(A\cap B)\oo=\dim(A\cap B)=d$. 
Since $A$ and $B$ are definably connected it follows that $A\cap B$ equals
$A$ or $B$, say $A$. 
\end{proof}

In particular, we are now in the situation in which $A\trianglelefteq B=G$. 

\begin{claim}
\label{ClaimWMAA=B}
The subgroups $A$ and $B$ are equal. 
\end{claim}
\begin{proof}Suppose that $\dim(A)<\dim(B)=\dim(G)$. Then by minimality of our counterexample we have that $A'=[A,A]$ is a connected definable subgroup of $A$.

Since $A'$ is characteristic in $A$, and $A$ is normal in $B$, we get that $A'$ is normal in $B$. Thus, we can work in $B/A'$. We denote by ``$\overline{\phantom{H}}$" the quotients by $A'$. Note that $\overline{A}$ is abelian.
Then $[\overline{A},\overline{B}]\leq \overline{A}=Z(\overline{A})$ and Fact \ref{rudimentary} gives that 
$[\overline{A},\overline{B}]$ is connected and definable. Since $A'\leq [A,B]$, we deduce the definability
and connectedness of $[A,B]$, a contradiction. \end{proof}

All in all, we are in the following situation: 

\smallskip
\noindent\emph{$G$ is a simply-connected definable group for which $G'$ is either non-definable or definable but non-connected, and such that
any proper normal connected definable subgroup $C$ of $G$ is central in $G$.}

\medskip
The group is non-solvable, otherwise by \cite{BJO} we would have that $G'$ is definable and connected. Since $R(G)$ is a proper connected definable subgroup of $G$, we get that $R(G)\leq Z(G)$ and therefore $R(G)=Z(G)\oo$. Then, by Proposition \ref{miconf} it follows that $G'$ is definable and connected, a contradiction.
\end{proof}

Natural examples of simply-connected o-minimal groups appear in the literature: the spin groups or the examples in \cite[\S 1]{PS} of solvable o-minimal groups which are not semialgebraic. However,  we would like to stress that simply-connectedness emerges canonically in the context of locally definable groups. Indeed, every o-minimal group has a (simply-connected) universal cover which is a locally definable group. Hence, it seems natural to ask:
\begin{qn}\label{conjecture}Let $G$ be a locally definable group $G$ which is the universal covering of a connected o-minimal group. Is $G'$ a simply-connected compatible subgroup of $G$ whose Lie algebra is $[\g,\g]$? 
\end{qn}

\begin{remark}We would like to finish this section by pointing out that recently we notice that part of the results in \cite{BJO} can be generalized to an  abstract model-theoretic context. Let $G$ be a group interpretable in a structure $\mathcal{M}$. Henceforth, definability refers to $\mathcal{M}^{eq}$. We suppose that to each definable set in Cartesian
powers of $G$ is attached a dimension in $\N$, denoted by $\dim$ and satisfying the following axioms:

\smallskip
\noindent(\emph{Definability})
If $f$ is a definable function between two definable sets $A$ and $B$, then for
every $m$ in $\N$ 
the set $\{ b\in B~|~\dim(f^{-1}(b))=m \}$ is a definable subset of $B$. \\
(\emph{Additivity})
If $f$ is a surjective definable function between two definable sets $A$ and $B$, whose
fibers have constant 
dimension $m$ in $\N$, then $\dim(A)=\dim(B)+m$. \\
(\emph{Finite sets})
A definable set $A$ is finite if and only if $\dim(A)=0$.

\smallskip

We also assume that $G$ satisfies the \emph{dcc}. In particular, $G$ has a smallest definable subgroup $G\oo$ of finite index, the intersection of all of them. Then:

\medskip
\emph{Let $G$ be a solvable group equipped with a dimension and with dcc. Let $A$ and
$B$ be two connected definable subgroups of $G$ which normalize each other. Then the subgroup $[A,B]$ is definable and connected.}

\medskip
Indeed, applying the reductions in the proof of  \cite[Thm.6.1]{BJO}, i.e. the claims 1,2 and 3 in the proof of Theorem \ref{MainTheoConnected} above, it  suffices to handle the following problem: given a connected solvable group $G$ equipped with a dimension and with dcc and such that any proper  normal connected definable subgroup is central, its derived subgroup $G'$ is definable and connected. Now, if $G$ is abelian then $G'$ is trivial and we are done. If $G$ is not abelian, then an argument in \cite[Thm.2.12]{PPS00-S} shows that there exists a proper normal connected definable subgroup $C$ of $G$ such that $G/C$ is abelian. This ends the proof since $G'\leq C \leq Z(G)$ and thus $G'$ is definable by the corresponding version of Fact \ref{rudimentary}.

We recall the argument in \cite[Thm.2.12]{PPS00-S}. Take $C$ a proper normal connected subgroup $C$ of $G$ of
maximal dimension. Suppose that $H:=G/C$ is not abelian. Then we prove that there exists a proper normal definable subgroup
$\widetilde{C}$ of $G$ such that $C$ is a subgroup of $\widetilde{C}$ of finite index and
$G/\tilde{C}$ is abelian. This yields a contradiction because $(G/C)'$ would be finite and therefore by \cite[Fact 3.1]{BJO} we would obtain that $G/C$ is abelian (just consider for each $g\in G$ the action by conjugation of $G$ over the finite set $g^G$). Since $H$ is a non-abelian solvable group, there exists $n\in
\mathbb{N}$, $n>1$, such that
$$1=H^{(n)}< H^{(n-1)}<\cdots<H$$
where $H^{(k)}:=[H^{(k-1)}]'$ for each $k\in \N$.
Let $m\in \mathbb{N}$, $m>1$, be minimal with $H^{(m)}$ finite. Let $\widetilde{C}$ be 
the normal definable subgroup of $G$ such that $\widetilde{C}/C=H^{(m)}$ and consider $H_1:=G/\widetilde{C}\simeq H/H^{(m)}$, where the symbol $\simeq$ means that the groups are definably isomorphic. Note that $E:=Z(C_{H_1}(H_1^{(m-1)}))$ contains $H_1^{(m-1)}$. Since $E$ is an infinite abelian normal definable subgroup of $H_1$, by maximality of $\dim(C)$  we get that $E=H_1$, so that $H_1$ is abelian, as required.
\end{remark}

\section{Malcev's cross-section}\label{s:malcev}
As an application of our previous results, we prove an o-minimal version of Fact \ref{Malcev}. We need first to study Levi decompositions in the simply-connected case. The following lemma follows from \cite{CP}, for the sake of completeness we provide a proof which becomes somewhat easier in our particular setting.

\begin{lemma}\label{levi} Let $G$ be a simply-connected definable group. Then there exists a semisimple simply-connected definable subgroup $S$ of $G$ such that $G=R(G)S$ and $R(G)\cap S=1$. 
\end{lemma}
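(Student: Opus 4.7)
The plan is to start from a general Levi-type decomposition of $G$ and then upgrade it using the simple-connectedness hypothesis. More precisely, I will invoke the Levi decomposition established in \cite{CP}: there exists a connected semisimple definable subgroup $S_0\leq G$ (a Levi subgroup) such that $G=R(G)\cdot S_0$ and $R(G)\cap S_0$ is finite (in fact central in $S_0$). The goal is then to show that, under the extra hypothesis that $G$ is simply-connected, this finite intersection is actually trivial and that $S_0$ itself is simply-connected; taking $S:=S_0$ will finish the lemma.

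First, I would apply Proposition \ref{propsimplycom} to the normal connected definable subgroup $R(G)\trianglelefteq G$. Since $G$ is simply-connected, this yields that both $R(G)$ and $G/R(G)$ are simply-connected. Moreover $G/R(G)$ is connected and semisimple by the very definition of $R(G)$, so by Fact \ref{semisimple} it is perfect and splits as a product of definably simple groups.

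Next, I would consider the projection $\pi: G\to G/R(G)$ and look at its restriction $\pi\!\upharpoonright_{S_0}: S_0\to G/R(G)$. This restriction is a definable homomorphism between connected definable groups; it is surjective because $G=R(G)\cdot S_0$, and its kernel equals $R(G)\cap S_0$, which has dimension $0$ by the Levi decomposition. Since $G/R(G)$ is simply-connected, Fact \ref{nocover} applies and gives that $\pi\!\upharpoonright_{S_0}$ is a definable isomorphism. In particular $R(G)\cap S_0=1$, and $S_0$ is definably isomorphic to $G/R(G)$, hence simply-connected. Setting $S:=S_0$ yields the desired decomposition.

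The main obstacle is extracting the precise form of the Levi decomposition needed from \cite{CP}: one needs a connected semisimple definable complement $S_0$ with $R(G)\cap S_0$ of dimension zero. Once that input is in hand, the argument reduces to a single application of Proposition \ref{propsimplycom} together with Fact \ref{nocover}, with no further geometric work required.
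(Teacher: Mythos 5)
Your reduction step is exactly the paper's opening move: granted a connected semisimple \emph{definable} $S_0$ with $G=R(G)S_0$ and $R(G)\cap S_0$ finite, one applies Proposition \ref{propsimplycom} to see that $G/R(G)\simeq S_0/(R(G)\cap S_0)$ is simply-connected and then Fact \ref{nocover} to kill the finite kernel. That part is fine. The gap is in your input. The statement you propose to quote from \cite{CP} --- that every definably connected group admits a \emph{definable} semisimple Levi subgroup meeting $R(G)$ in a finite set --- is false in general: Conversano's example in \cite[\S 1]{CP} (recalled in the introduction of this paper) is a definably connected group whose Levi subgroups are only ind-definable and meet the radical in an infinite (central, countable) subgroup. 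What \cite{CP} provides in general is an ind-definable Levi subgroup, and upgrading it to a definable one with finite intersection is precisely where the simple-connectedness hypothesis must enter; your proposal never uses that hypothesis before the final two-line reduction, which is a sign that the hard content has been assumed away.

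The paper's proof supplies exactly this missing existence statement, in three stages. For linear $G$ it takes a Levi decomposition $\g=\mathfrak{r}+\mathfrak{s}$ of the Lie algebra, uses that $\mathfrak{s}=[\mathfrak{s},\mathfrak{s}]$ is an algebraic subalgebra to produce an algebraic group $S_1$ with $\mathrm{Lie}(S_1)=\mathfrak{s}$, and sets $S:=S_1\oo$. It then passes to almost-linear groups by pulling back along a finite central quotient, and finally handles the general case via the adjoint map: it pulls back the Levi subgroup of the almost-linear group $G/Z(G)\oo$ to a simply-connected definable $B\leq G$ and takes $S:=[B,B]$, whose \emph{definability} is exactly Theorem \ref{MainTheoConnected} (so the Levi lemma genuinely depends on the main theorem of Section \ref{s:simplyconnected}); finiteness of $Z(G)\oo\cap S$ is then extracted from Fact \ref{f:HPP} by a saturation argument. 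None of this is optional bookkeeping: without it your $S_0$ need not exist as a definable group, and the lemma does not follow.
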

\begin{proof}It is enough to prove there is a connected semisimple definable subgroup $S$ such that 
$G=R(G)S$ and $R(G)\cap S$ is finite. For, in that case the quotient
$$G/R(G)=R(G)S/R(G)\simeq S/(R(G)\cap S)$$
is simply-connected by Proposition \ref{propsimplycom}. Then, by Fact \ref{nocover}, the finite normal subgroup $R(G)\cap S$ of $S$ must be trivial. In particular, $S\simeq G/R(G)$ is simply-connected, as required.

Suppose first that $G\leq \GL(n,R)$ is linear. Let $\g=\mathfrak{r}+\mathfrak{s}$ be a Levi decomposition of the Lie algebra $\g$ of $G$, where $\mathfrak{r}$ denotes the radical of $\g$. We note that $\text{Lie}(R(G))=\mathfrak{r}$. Indeed, since $R(G)$ is solvable, its Lie algebra $\text{Lie}(R(G))$ is solvable \cite[Lem.3.7]{BBO} and therefore $\text{Lie}(R(G))\subseteq \mathfrak{r}$. In particular, since $G/R(G)$ is semisimple, it follows from Fact \ref{semisimple} that $\g/\text{Lie}(R(G))$ is semisimple and so $\text{Lie}(R(G))=\mathfrak{r}$. On the other hand, since $\mathfrak{s}=[\mathfrak{s},\mathfrak{s}]=[a(\mathfrak{s}),a(\mathfrak{s})]$ is algebraic, there is an algebraic subgroup $S_1$ of  $\GL(n,R)$ whose Lie algebra is $\mathfrak{s}$. Therefore $S:=S_1\oo$ is a  connected semisimple definable subgroup of $G$ such that $G=R(G)S$ and $R(G)\cap S$ is finite, as desired.

Now, suppose that $G$ is almost-linear, i.e. there is a finite normal (central) subgroup $N$ of $G$ such that $G/N$ is linear. Let $\pi:G\rightarrow G/N$ be the canonical projection, and let $R_1:=R(G)N/N$ be the radical of $G/N$. By the above there exists a connected semisimple definable subgroup $S_1$ of $G/N$ such that $G/N=R_1S_1$ and $R_1\cap S_1=1$. Then for the connected semisimple definable subgroup $S:=\pi^{-1}(S_1)\oo$ of $G$ we have that $G=R(G)S$ and $R(G)\cap S$ is finite, as required.

For the general case, recall that the kernel of the definable homomorphism ${Ad:G\rightarrow \text{Aut}(\g)}$ is $Z(G)$, and so the definable group $G/Z(G)\oo$ is almost-linear and simply-connected by Proposition \ref{propsimplycom}. Denote $\pi:G\rightarrow G/Z(G)\oo$ the canonical projection and let $R_1:=R(G/Z(G)\oo)=R(G)/Z(G)\oo$. Let $S_1$ be a simply-connected semisimple definable subgroup of $G/Z(G)\oo$ such that $G/Z(G)\oo=R_1S_1$ and $R_1\cap S_1=1$. 

Consider the connected definable subgroup $B:=\pi^{-1}(S_1)$ of $G$. By Proposition \ref{propsimplycom}, since both $S_1=B/Z(G)\oo$ and $Z(G)\oo$ are simply-connected, we have that $B$ is simply-connected. In particular, $S:=[B,B]$ is also definable and simply-connected by Theorem \ref{MainTheoConnected}. 
Let us show that $Z(G)\oo\cap S$ is finite. To be semisimple and/or simply-connected is preserved under elementary extensions. Therefore, we can assume that $\mathcal{R}$ is $\aleph_1$-saturated. Since the simply-connected definable group $B$ is a central extension of the semisimple group $S_1$, for each $n\in \N$
$$Z(G)\oo\cap [B,B]_n \subseteq Z(B)\oo\cap [B,B]_n$$
is finite by Fact \ref{f:HPP}. In particular, since $[B,B]$ is definable by Proposition \ref{propsimplycom}, by saturation there exists $n_0\in \N$ such that
$$Z(G)\oo\cap [B,B]_{n_0}=Z(G)\oo\cap [B,B]_{n}$$
for all $n\geq n_0$ and therefore $Z(G)\oo \cap S=Z(G)\oo\cap [B,B]_{n_0}$ is finite.

On the other hand, by Fact \ref{semisimple} we have that $S'_1=S_1$. Thus, $\pi(S)=S_1$ and 
$$S_1 \simeq SZ(G)\oo/Z(G)\oo\simeq S/(Z(G)\oo\cap S).$$
Since $S_1$ is simply-connected, it follows from Fact \ref{nocover} that  $Z(G)\oo \cap S$ is trivial. In particular, $S$ is a simply-connected semisimple definable subgroup of $G$.  Moreover, we clearly have that $G=R(G)S$ and
$(R(G)\cap S)\oo \trianglelefteq R(S)=1$, as required.
\end{proof}

In \cite{PS05} the authors show that if $G$ is a connected definable group and $H$ is a contractible normal definable subgroup of $G$ then there is a continuous definable \emph{cross-section} (or just \emph{section}) of the projection map $\pi:G\rightarrow G/H$, i.e., a continuous definable map $\sigma:G/H\rightarrow G$ such that $\pi\circ \sigma=\text{id}$. The other classical result concerning the existence of cross-sections is, in the o-minimal setting, an easy consequence of the results in \cite{BMO}:

\begin{lemma}\label{contractible}Let $G$ be a connected definable group and let $H$ be a normal connected definable subgroup of $G$. If $G/H$ is contractible then there exists a continuous definable section of the projection map $\pi:G\rightarrow G/H$.
\end{lemma}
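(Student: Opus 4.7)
The plan is a direct application of the homotopy lifting property for the fibration $\pi:G\to G/H$. By \cite[Cor.2.4]{BMO} (which was already invoked in the proof of Proposition \ref{propsimplycom}), the projection $\pi$ is a definable fibration, so it has the definable homotopy lifting property with respect to every definable set $X$. I will use this with $X=G/H$ to lift a contracting homotopy of the base to a homotopy in the total space whose time-$1$ slice will be the desired section.

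Concretely, since $G/H$ is (definably) contractible, there exists a continuous definable homotopy $F:G/H\times I\to G/H$ with $F(-,0)$ equal to the constant map at $eH$ and $F(-,1)=\mathrm{id}_{G/H}$. Take the constant definable map $g:G/H\to G$ sending every point to $e\in G$; then $\pi\circ g$ coincides with $F(-,0)$. Applying the definable homotopy lifting property with $X=G/H$, $H:=F$ and $g$ as above yields a continuous definable homotopy $H_1:G/H\times I\to G$ such that $\pi\circ H_1=F$ and $H_1(-,0)=g$. Setting $\sigma(x):=H_1(x,1)$ gives a continuous definable map $\sigma:G/H\to G$ satisfying
\[
\pi\circ\sigma(x)=\pi\circ H_1(x,1)=F(x,1)=x,
\]
so $\pi\circ\sigma=\mathrm{id}_{G/H}$, as required.

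The only non-routine point is ensuring that the contracting homotopy of $G/H$ can indeed be chosen \emph{definable}, but this is exactly what definable contractibility means in the o-minimal setting, and it is the natural interpretation of the hypothesis here; once that is granted, the argument is purely formal from the fibration property. I do not foresee a genuine obstacle: in contrast to the analogous real-Lie-group statement, there is no need here to appeal to Malcev's theorem, Lie correspondence, or any global structural result; the whole lemma is a one-line consequence of \cite[Cor.2.4]{BMO} combined with the lifting axiom of a definable fibration.
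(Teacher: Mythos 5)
Your proof is correct and is essentially identical to the paper's: both invoke \cite[Cor.2.4]{BMO} to see that $\pi$ is a definable fibration and then lift a definable contracting homotopy of $G/H$, starting from the constant lift at the identity, taking the other end of the lifted homotopy as the section. The only difference is the (immaterial) orientation of the homotopy parameter.
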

\begin{proof} By \cite[Cor.2.4]{BMO} the projection map $G\rightarrow G/H$ is a definable fibration. Since $G/H$ is contractible, there exists a continuous definable map $F:G/H\times [0,1] \rightarrow G/H$ such that $F(-,0)=\text{id}$ and $F(-,1)$ is the constant function $c:G/H\rightarrow G/H:\bar{g}\mapsto \bar{1}$. Consider the lifting $\widetilde{c}:G/H\rightarrow G:\bar{g}\mapsto 1$ of $c$. By the homotopy lifting property of the projection $\pi$ with respect to all definable sets, there is a continuous definable map
$$\widetilde{F}:G/H\times [0,1]\rightarrow G$$
such that $\pi \circ \widetilde{F}=F$ and $\widetilde{F}(-,1)=\widetilde{c}$. In particular, the continuous definable map $\sigma:=\widetilde{F}(-,0):G/H\rightarrow G$ satisfies $\pi \circ \sigma =\text{id}$, as desired.
\end{proof}

We already have all the ingredients to prove the existence of cross-sections in the simply-connected case:

\begin{theorem}\label{Cor2}Let $G$ be a simply-connected definable group and let $H\trianglelefteq G$ be a connected definable subgroup. Then there exists a continuous definable section of the projection map $\pi:G\rightarrow G/H$.
\end{theorem}

\begin{proof}We prove it by induction on $\dim(G)$. The initial case $\dim(G)=0$ is obvious, so we assume that $\dim(G)\geq 1$ and the statement holds for all simply-connected definable groups of dimension less than $\dim(G)$. Let $H\trianglelefteq G$ be a connected definable subgroup and $\pi:G\rightarrow G/H$ the canonical projection. Note that both $H$ and $G/H$ are simply-connected by Proposition \ref{propsimplycom}.

\smallskip
\noindent\emph{Claim. If there are proper connected definable subgroups $A_1$ and $B_1$ of $G/H$ such that $A_1B_1=G/H$ and $A_1\cap B_1=1$ then there exists a continuous definable section of $\pi:G\rightarrow G/H$.}

\begin{proof}The map
$$\phi:A_1 \times B_1 \rightarrow  G/H:(a,b)\mapsto ab$$
is clearly a definable homeomorphism. In particular, since $\pi_1(A_1\times B_1)=\pi_1(A_1)\times \pi_1(B_1)$ by \cite[Lem.2.2]{EO} and $\phi_*: \pi_1(G/H)\rightarrow \pi_1(A_1\times B_1)$ is an isomorphism, it follows that both $A_1$ and $B_1$ are simply-connected. Moreover, by Proposition \ref{propsimplycom} the proper definable subgroups $A:=\pi^{-1}(A_1)$ and $B:=\pi^{-1}(B_1)$ of $G$ are simply-connected. 

By induction there are continuous definable maps
$$\sigma_A: A_1\rightarrow A \qquad \& \qquad \sigma_B: B_1\rightarrow B$$
such that $\pi \circ\sigma_A= \text{id}$ and $\pi \circ\sigma_B=\text{id}$. Consider the continuous definable maps
$$\sigma_{A\times B}:A_1\times B_1 \rightarrow A \times B:(x,y)\mapsto (\sigma_A(x),\sigma_B(y))$$
and
$$\psi:A\times B \rightarrow G:(x,y)\mapsto xy.$$
Then $$\sigma:=\psi \circ \sigma_{A \times B} \circ \phi^{-1}:G/H\rightarrow G$$
is a continuous definable map which satisfies $\pi\circ \sigma=\text{id}$, as desired.\end{proof}

By Proposition \ref{levi} there exists a definable simply-connected subgroup $S_1$ of $G/H$ such that $G/H=R_1S_1$ and $R_1\cap S_1=1$, where $R_1=R(G/H)$. Thus, without loss of generality either $G/H=R_1$ or $G/H=S_1$, by the Claim above. If $G/H=R_1$ then by Corollary \ref{Cor1} and Lemma \ref{contractible} there exists a continuous definable section $\sigma:G/H\rightarrow G$, so we can assume that $G/H=S_1$ is semisimple. Moreover, by Fact \ref{semisimple} there are
normal definable subgroups $C_1,\ldots,C_\ell$ of $S_1$ containing the finite center $Z(S_1)$ such that
$$S_1/Z(S_1)\simeq [C_1/Z(S_1)]\times \cdots \times [C_\ell/Z(S_1)]$$
and where $C_i/Z(S_1)$ is definably simple for $i=1,\ldots,\ell$. Since $C\oo_iZ(S_1)/Z(S_1)$ is a non-trivial normal definable subgroup of $C_i/Z(S_1)$, we have $C\oo_iZ(S_1)=C_i$ for each $i=1,\ldots,\ell$. Moreover, for each $i=1,\dots,\ell-1$ the intersection
$$(C\oo_1\cdots C\oo_i)\cap C\oo_{i+1} \subseteq (C_1\cdots C_i)\cap C_{i+1} \subseteq Z(S_1)$$
is finite. In particular, it follows that $\dim(C_1\oo\ldots C\oo_\ell)=\dim(C\oo_1)+\cdots+\dim(C\oo_\ell)=\dim(S_1)$ and since $S_1$ is connected, 
we have that $S_1:=C_1\oo\ldots C\oo_\ell$. If $\ell>1$ then we define $N:=C\oo_1 \cdots C\oo_{\ell-1}$, so that $S_1=NC\oo_\ell$ and $N\cap C\oo_\ell$ is finite. Since
$$C\oo_\ell/(N\cap C\oo_\ell) \simeq C\oo_\ell N/N=S_1/N$$
is simply-connected, we deduce from Fact \ref{nocover} that $N\cap C\oo_\ell=1$. Hence, again by the Claim we can assume that $G/H=S_1=C\oo_\ell$, and so every  proper normal definable subgroup of $G/H$ is finite and central.

Next, suppose that $R(H)$ is not trivial. Since $H$ is normal in $G$, and  $R(H)$ is characteristic in $H$, we get that $R(H)$ is normal in $G$. Thus, by Proposition \ref{propsimplycom} the connected definable group $G/R(H)$ is simply-connected. By induction there is a continuous definable section $\sigma_0:G/H \rightarrow G/R(H)$ of the projection map $G/R(H)\rightarrow G/H$. On the other hand, since $R(H)$ is solvable and simply-connected, by Corollary \ref{Cor1} the group $R(H)$ is contractible. Thus, by \cite[Thm.5.1]{PS05} we also have a continuous definable section $\sigma_1:G/R(H)\rightarrow G$ of the projection $G\rightarrow G/R(H)$. In particular, $\sigma:=\sigma_1\circ \sigma_0$  is the desired section of $\pi:G\rightarrow G/H$. Hence, we can assume that $H$ is semisimple.

Finally, by our assumptions on $H$ and $G/H=S_1=C\oo_\ell$, we can assume that $G$ is semisimple. In particular, there are normal definable subgroups $E_1,\ldots,E_m$ of $G$ containing the finite center $Z(G)$ such that  $$G/Z(G)\simeq [E_1/Z(G)]\times \cdots \times [E_m/Z(G)],$$
and where $E_i/Z(G)$ is definably simple for $i=1,\ldots,m$. Each $E\oo_iH/H$ is a normal definable subgroup of $G/H$ and therefore either it equals $G/H$ or it is finite. Thus, we can assume that $E\oo_1H=G$ and so the definable homomorphism
$$\pi \upharpoonright _{E\oo_1}:E\oo_1\rightarrow G/H$$
is surjective. Since $\ker(\pi \upharpoonright _{E\oo_1})$ is a  normal definable subgroup of $E_1$, it is finite. It follows from Fact \ref{nocover} that  $\pi \upharpoonright _{E\oo_1}$ is a definable isomorphism. The inverse of $\pi \upharpoonright _{E\oo_1}$ gives the required  continuous definable section $\sigma: G/H\rightarrow G$.
\end{proof}

\end{document}